\newtheorem{prop}{Proposition}[section]
\newtheorem{theorem}[prop]{Theorem}
\newtheorem{lemma}[prop]{Lemma}
\newtheorem*{theorem*}{Theorem}
\crefname{prop}{Proposition}{Propositions}
\theoremstyle{definition}
\theoremstyle{remark}
\newtheorem{examples}[prop]{Examples}
\newtheorem*{remark*}{Remark}
\newtheorem{remark}[prop]{Remark}
\newtheorem{que}[prop]{Question}
\theoremstyle{theorem}
\newcommand{\R}{\mathbb{R}}
\newcommand{\N}{\mathbb{N}}
\newcommand{\Z}{\mathbb{Z}}
\newcommand{\Q}{\mathbb{Q}}
\newcommand{\eps}{\varepsilon}
\newcommand{\h}{\text{\textup{h}}}
\newcommand{\Aut}{\text{\textup{Aut}}\,}
\newcommand{\Aff}{\mathcal{A}}
\newcommand*{\Fol}{\mathop{\mathrm{F\o l}}\nolimits}
\renewcommand{\ge}{\geqslant}
\renewcommand{\le}{\leqslant}
\renewcommand{\geq}{\geqslant}
\renewcommand{\leq}{\leqslant}
\numberwithin{equation}{section}
\title{Geometric amenability in totally disconnected locally compact groups}
\date{}
\author{Romain Tessera}
\address{Institut de Math\'ematiques de Jussieu-Paris Rive Gauche, France}
\email{romain.tessera@imj-prg.fr}
\author{Matthew Tointon}
\address{School of Mathematics, University of Bristol, United Kingdom}
\email{m.tointon@bristol.ac.uk}
\thanks{For part of this project M. Tointon was supported by the Stokes Research Fellowship from Pembroke College, Cambridge}
\subjclass[2020]{22D05, 43A07, 05C25 (primary), 20F69, 20F65, 05C63 (secondary)}
\keywords{Totally disconnected locally compact group; amenable group; unimodular locally compact group; graph automorphism}
\begin{document}
\maketitle
\begin{abstract}
We give a short geometric proof of a result of Soardi \& Woess and Salvatori that a quasitransitive graph is amenable if and only if its automorphism group is amenable and unimodular. We also strengthen one direction of that result by showing that if a compactly generated totally disconnected locally compact group admits a proper Lipschitz action on a bounded-degree amenable graph then that group is amenable and unimodular. We pass via the notion of \emph{geometric amenability} of a locally compact group, which has previously been studied by the second author and is defined by analogy with amenability, only using right F\o lner sets instead of left F\o lner sets. We also introduce a notion of \emph{uniform geometric non-amenability} of a locally compact group, and relate this notion in various ways to actions of that group on graphs and to its modular homomorphism.
\end{abstract}

\setcounter{tocdepth}{1}
\tableofcontents

\section{Introduction}

A well-known result of Soardi \& Woess \cite[Corollary 1]{soardi-woess} states that a vertex-transitive graph is amenable if and only if its automorphism group is amenable and unimodular. Salvatori \cite[Theorem 1]{salvatori} generalised this result to quasitransitive graphs. Benjamini, Lyons, Peres and Schramm \cite[Remarks 3.11 \& 6.3]{blps} later gave an alternative proof. Each of these proofs emerged as a corollary of broader work: Soardi and Woess's and Salvatori's proofs came out of work on random walks (see also \cite{sc-woess}), whilst one direction of Benjamini, Lyons, Peres and Schramm's proof came out of work on percolation. Lyons and Peres subsequently gave a geometric proof of this direction \cite[Proposition 8.14]{ly-per}. The initial purpose of this paper is to offer a quick, geometric proof of the Soardi--Woess--Salvatori theorem, and strengthen one direction of it by showing that the presence of a proper action on a bounded-degree amenable graph that is merely Lipschitz, and not necessarily quasitransitive, is enough to imply that a compactly generated totally disconnected locally compact group is amenable and unimodular.

We start by presenting the necessary definitions. Let $\Gamma$ be a graph. Given a set $A\subseteq\Gamma$ of vertices, we define the \emph{boundary} $\partial A$ to consist of those vertices that do not belong to $A$ but have a neighbour in $A$. The \emph{(vertex) Cheeger constant} $\h(\Gamma)$ of $\Gamma$ is defined via
\[
\h(\Gamma)=\inf_{\substack{A\subseteq V\\|A|<\infty}}\frac{|\partial A|}{|A|},
\]
and $\Gamma$ is then called \emph{amenable} if $\h(\Gamma)=0$.

The group $\Aut(\Gamma)$ of automorphisms of $\Gamma$ is a locally compact group with respect to the topology of pointwise convergence, which is metrisable. Every closed subgroup of $\Aut(\Gamma)$ is then a compactly generated, totally disconnected, locally compact group in which vertex stabilisers are compact and open. This is classical, and treated in detail in \cite{tt.trof,trof,woess} for instance.

Now let $G$ be an arbitrary compactly generated locally compact group. An action of $G$ by automorphisms on $\Gamma$ is called \emph{continuous} if the homomorphism $G\to\Aut(\Gamma)$ it induces is continuous. \textbf{In this paper, we assume by definition that all actions of topological groups on graphs are continuous}. An action is called \emph{proper} if vertex stabilisers are compact in $G$; in particular, every closed subgroup of $\Aut(\Gamma)$ acts properly on $\Gamma$. It is called \emph{transitive} if there is a unique orbit of vertices, and \emph{quasitransitive} if there are only finitely many orbits of vertices. A theorem of Abels \cite{ab} (see \cite[Proposition 2.E.9]{CH} or \cite[Theorem 2.2\textsuperscript{+}]{KM}) states that every compactly generated, totally disconnected, locally compact group admits a transitive proper continuous action on some connected, locally finite graph.

Suppose $G$ is a compactly generated locally compact group acting properly continuously by automorphisms on a locally finite graph $\Gamma$. Given $C>0$, we say that this action is \emph{$C$-Lipschitz} with respect to a given compact symmetric generating set $S$ if in each orbit there exists some vertex $x$ such that the orbit map
 \[
\begin{array}{ccc}
(G,S)&\to&G\cdot x\\
g&\mapsto &g\cdot x
\end{array}
\]
is $C$-Lipschitz (here, $(G,S)$ means the group $G$ endowed with the word metric with respect to $S$). We will say that the action is \emph{$C$-Lipschitz} to mean that there exists some $S$ with respect to which it is $C$-Lipschitz, and simply \emph{Lipschitz} to mean that there exists some $C>0$ such that the action is $C$-Lipschitz.

Every quasitransitive action is Lipschitz. Indeed, if vertices $x_1,\ldots,x_k$ are representatives of the orbits, then the action is $C$-Lipschitz where $C$ is the maximal integer $n$ such that $S\cdot x_i\subseteq B(x_i,n)$ for every $i=1,\ldots,k$.

A locally compact group $G$ admits a \emph{(left) Haar measure} $\mu$, the properties of which include that
\begin{enumerate}[label=(\roman*)]
\item$\mu(K)<\infty$ if $K$ is compact,
\item$\mu(U)>0$ if $U$ is open and nonempty,
\item$\mu(gA)=\mu(A)$ for every Borel set $A\subseteq G$ and every $g\in G$, and
\item\label{item:Haar.unique} if $\mu'$ is another Haar measure on $G$ then there exists $\lambda>0$ such that $\mu'=\lambda\cdot\mu$.
\end{enumerate}
See \cite[\S15]{hew-ross} for a detailed introduction to Haar measures. Note that since a right translate of a Haar measure is again a Haar measure, by property \ref{item:Haar.unique} there exists a homomorphism $\Delta_G:G\to\R^+$, called the \emph{modular homomorphism}, such that
\[
\mu(Ag)=\Delta_G(g)\mu(A)
\]
for every Borel set $A$. Property \ref{item:Haar.unique} also implies that $\Delta_G$ depends only on $G$, and not on $\mu$. The group $G$ is called \emph{unimodular} if $\Delta_G\equiv1$, in which case $\mu(gA)=\mu(Ag)=\mu(A)$ for every Borel set $A\subseteq G$ and every $g\in G$. Note that if $G$ is unimodular then we may define another Haar measure $\mu'$ by $\mu'(A)=\mu(A^{-1})$, and then by property \ref{item:Haar.unique} we have $\mu=\mu'$ so that $\mu$ is symmetric.

A locally compact group $G$ with Haar measure $\mu$ is called \emph{amenable} if for every compact subset $K\subseteq G$ and every $\eps>0$ there exists a compact set $U\subseteq G$ of positive measure such that
\[
\frac{\mu(KU)}{\mu(U)}\le1+\eps.
\]
The group $G$ is called \emph{geometrically amenable} if for every compact subset $K\subseteq G$ and every $\eps>0$ there exists a compact set $U\subseteq G$ of positive measure such that
\[
\frac{\mu(UK)}{\mu(U)}\le1+\eps.
\]
In a unimodular group these notions coincide by the symmetry of $\mu$. In a non-unimodular group $G$, there exists $k\in G$ such that $\Delta_G(k)>1$, and then since $\mu(Uk)=\Delta_G(k)\mu(U)$ for all compact sets $U$ of positive measure, $G$ is not geometrically amenable. Thus, we have the following lemma, previously noted by the first author \cite[\S11]{tessera.sobolev}.
\begin{lemma}\label{lem:amen-unimod}
A locally compact group is geometrically amenable if and only if it is amenable and unimodular.
\end{lemma}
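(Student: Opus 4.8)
The plan is to prove the two directions of the biconditional separately, using the characterisations already laid out in the excerpt. The key observation is that geometric amenability differs from amenability only by the use of right F\o{}lner sets (sets $U$ with $\mu(UK)/\mu(U)$ small) rather than left F\o{}lner sets (sets with $\mu(KU)/\mu(U)$ small), and that the modular homomorphism $\Delta_G$ precisely measures the failure of left Haar measure to be right-invariant.

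First I would prove the easier direction, namely that geometric amenability implies amenability and unimodularity. For unimodularity, suppose $G$ is \emph{not} unimodular; then there exists $k\in G$ with $\Delta_G(k)>1$, and for every compact $U$ of positive measure we have $\mu(Uk)=\Delta_G(k)\mu(U)$. Taking $K=\{k\}$ (or any compact set containing $k$), this forces $\mu(UK)/\mu(U)\ge\Delta_G(k)>1$ uniformly in $U$, so no right F\o{}lner sequence exists and $G$ fails to be geometrically amenable. Hence geometric amenability already forces $\Delta_G\equiv1$, i.e. unimodularity. Once we know $G$ is unimodular, the measure $\mu$ is symmetric (as noted in the excerpt), so $\mu(KU)$ and $\mu(UK)$ are controlled by one another via inversion: writing $\mu(KU)=\mu((KU)^{-1})=\mu(U^{-1}K^{-1})$ and applying the right-F\o{}lner condition to the compact set $K^{-1}$ with the set $U^{-1}$ (which has the same measure as $U$) shows that the right F\o{}lner sets yield left F\o{}lner sets, giving amenability.

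For the converse, suppose $G$ is amenable and unimodular. Unimodularity again gives symmetry of $\mu$, so the same inversion trick runs in reverse: given a compact $K$ and $\eps>0$, apply the (left) amenability condition to $K^{-1}$ to obtain a compact $U$ with $\mu(K^{-1}U)/\mu(U)\le1+\eps$, and then $U^{-1}$ satisfies $\mu(U^{-1}K)/\mu(U^{-1})=\mu((K^{-1}U)^{-1})/\mu(U^{-1})=\mu(K^{-1}U)/\mu(U)\le1+\eps$, which is exactly the geometric (right) F\o{}lner condition. Thus $G$ is geometrically amenable, completing the equivalence.

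I expect the main obstacle to be purely bookkeeping rather than conceptual: one must be careful that the inversion map $U\mapsto U^{-1}$ preserves the class of compact sets of positive measure (immediate, since inversion is a homeomorphism) and that $\mu(U^{-1})=\mu(U)$ genuinely holds, which is precisely where unimodularity is used through symmetry of $\mu$. The non-unimodular obstruction to the right F\o{}lner condition is the genuinely non-formal input, but the excerpt has already essentially supplied it in the remark preceding the lemma, so the argument reduces to assembling these observations cleanly.
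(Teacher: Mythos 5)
Your proposal is correct and follows essentially the same route as the paper: the paper's proof is exactly the two observations in the paragraph preceding the lemma, namely that non-unimodularity obstructs right F\o lner sets via $\mu(Uk)=\Delta_G(k)\mu(U)$, and that in the unimodular case the two notions coincide ``by the symmetry of $\mu$''. Your contribution is merely to spell out that symmetry step as the explicit inversion trick $U\mapsto U^{-1}$, which is what the paper leaves implicit.
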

This opens a new avenue to understanding which groups are both amenable and unimodular, which we exploit to prove the following result, which in some sense shows that simultaneous amenability and unimodularity of an \emph{arbitrary} compactly generated, totally disconnected, locally compact group necessarily reflects an action of that group on an amenable graph.
\begin{theorem}\label{thm:amen+unimod}
Let $G$ be a compactly generated, totally disconnected, locally compact group. Then the following are equivalent:
\begin{enumerate}
\item $G$ is amenable and unimodular;\label{item:amen+unimod}
\item $G$ is geometrically amenable;\label{item:geom.amen}
\item there exists a bounded-degree amenable graph admitting a proper Lipschitz action of $G$;\label{item:exists.Lip}
\item there exists an amenable graph admitting a proper quasitransitive action of $G$;\label{item:exists.quasi}
\item there exists an amenable graph admitting a proper transitive action of $G$;\label{item:exists.VT}
\item every graph admitting a proper quasitransitive action of $G$ is amenable.\label{item:all.quasi}
\end{enumerate}
\end{theorem}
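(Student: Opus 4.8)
The plan is to prove the cycle of implications $(2)\Rightarrow(5)\Rightarrow(4)\Rightarrow(3)\Rightarrow(2)$, fold in $(6)$ through $(4)\Rightarrow(6)\Rightarrow(5)$, and use \cref{lem:amen-unimod} for $(1)\Leftrightarrow(2)$. The conceptual backbone is a dictionary between right (``geometric'') F\o lner sets of $G$ and finite sets of small boundary in an orbit graph: if $K=\Stab(x_0)$ is a compact open stabiliser and $A$ is a subset of the orbit $G\cdot x_0\cong G/K$, then $U=\pi^{-1}(A)$ satisfies $\mu(U)=|A|\,\mu(K)$, while right-translating $U$ by a compact set thickens $A$ inside $\Gamma$. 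It is precisely \emph{right} multiplication that corresponds to passing to graph neighbours, since the left $G$-action moves a configuration rigidly; this is why geometric rather than ordinary amenability is the natural invariant here.

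For $(2)\Rightarrow(5)$ I would invoke Abels' theorem to get a transitive proper action on a connected locally finite graph $\Gamma_0$, with base vertex $o$, stabiliser $K$, and neighbour set encoded by a symmetric compact, right-$K$-invariant set $\Omega=\pi^{-1}(N(o))$; set $\Omega'=\Omega\cup K$. Applying geometric amenability to the compact set $K\Omega'$ yields $U_0$ with $\mu(U_0K\Omega')\le(1+\eps)\mu(U_0)$; rounding to $U=U_0K$ makes $U$ a finite union of cosets with $\mu(U\Omega')\le(1+\eps)\mu(U)$, and the dictionary turns this into a finite set $A=\pi(U)$ with $|\partial A|\le\eps|A|$, whence $\h(\Gamma_0)=0$. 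The implications $(5)\Rightarrow(4)$ (transitive is quasitransitive) and $(4)\Rightarrow(3)$ (a quasitransitive action on a locally finite graph has bounded degree, and is Lipschitz, as noted above) are immediate.

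The heart of the argument, and the main obstacle, is $(3)\Rightarrow(2)$, where the action is merely Lipschitz and may have infinitely many orbits, so there is no single clean coset picture. I would first record a \emph{robustness} property of amenability for bounded-degree graphs: since $|N_r(A)\setminus A|\le C_{r,D}\,|\partial A|$ for a degree bound $D$ (any vertex within distance $r$ of $A$ but outside $A$ lies within distance $r-1$ of $\partial A$), $\h(\Gamma)=0$ gives, for every $r$ and $\eps$, a finite $A$ with $|N_r(A)|\le(1+\eps)|A|$. Given a compact $K'\subseteq S^m$ (possible as $S$ is a compact generating set), I take such an $A$ with $r=Cm$ and decompose it over orbits, $A=\bigsqcup_iA_i$. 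A pigeonhole/averaging step over the orbits produces a \emph{single} orbit $i$ and a nonempty $A_i\subseteq G\cdot x_i$ with $|N_{Cm}(A_i)\cap G\cdot x_i|\le(1+\eps)|A_i|$, where I use the Lipschitz base point of that orbit. Pulling back via $U=\{g:gx_i\in A_i\}$ and using that $k\in S^m$ moves $x_i$ by at most $Cm$ while $g$ acts isometrically, one gets $(UK')x_i\subseteq N_{Cm}(A_i)\cap G\cdot x_i$, hence $\mu(UK')\le(1+\eps)\mu(U)$; as $K'$ and $\eps$ were arbitrary, $G$ is geometrically amenable. The reduction to one orbit via pigeonhole, together with the radius-$Cm$ robustness absorbing the passage from $S$ to arbitrary compacta, is exactly what makes the non-quasitransitive case go through.

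Finally I would incorporate $(6)$. For $(6)\Rightarrow(5)$, Abels' graph carries a transitive, hence quasitransitive, proper action, so $(6)$ forces it to be amenable. For $(4)\Rightarrow(6)$, I would note that any connected graph carrying a proper quasitransitive action of $G$ is quasi-isometric to $G$ by the Milnor--\v{S}varc lemma (the action is proper and cocompact by automorphisms of a length space); since such graphs have bounded degree and amenability is a quasi-isometry invariant of bounded-degree graphs, the existence of one amenable example forces all of them to be amenable. This closes all the equivalences, the only genuinely new ingredient beyond standard tools being the Lipschitz pullback in $(3)\Rightarrow(2)$.
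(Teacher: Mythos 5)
Your proposal is correct, and while its two engines --- the coset-pullback dictionary $\mu(G_{o\to X})=|X|\cdot\mu(G_o)$ and the pigeonhole over orbits --- are exactly the paper's (\cref{lem:S.gen} and \cref{prop:LipActionOnAmenable}), the architecture is genuinely different in two places. First, the paper closes the cycle through the full Soardi--Woess--Salvatori theorem: it proves \eqref{item:amen+unimod} $\implies$ \eqref{item:all.quasi} via \cref{thm:sw}, whose proof requires \cref{prop:reduc} (passing to a compact-kernel quotient acting transitively on an auxiliary graph quasi-isometric to the given one) together with \cref{prop:trans}. You instead prove \eqref{item:geom.amen} $\implies$ \eqref{item:exists.VT} directly (in substance, one direction of \cref{prop:trans} applied to the Abels graph, with your rounding $U=U_0K$ playing the role of the paper's $U_nG_o$), and you obtain \eqref{item:exists.quasi} $\implies$ \eqref{item:all.quasi} by noting that every connected locally finite graph with a proper quasitransitive $G$-action is quasi-isometric to $G$ by the locally compact Milnor--\v{S}varc lemma, hence any two such graphs are quasi-isometric to each other, so quasi-isometry invariance of amenability for bounded-degree graphs transfers amenability from the one example to all of them. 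This buys independence from \cref{thm:sw} at the cost of invoking Milnor--\v{S}varc for locally compact groups; morally, \cref{prop:reduc} is the paper's elementary substitute for exactly that appeal, and both routes lean on the same quasi-isometry invariance theorem. Second, in \eqref{item:exists.Lip} $\implies$ \eqref{item:geom.amen}, the paper deletes inter-orbit edges, applies \cref{lem:quantitativeVT} to the induced transitive orbit graph, and then must pass through the generating-set criterion \cref{lem:geom.amen.gen.set}, which is why the factor $1+\sup_{s\in S}\Delta_G(s)$ appears there; you instead tailor the F\o lner set to each compact $K'\subseteq S^m$ by invoking $r$-amenability with $r=Cm$ (the same bounded-degree robustness the paper records), which lands directly on the definition of geometric amenability and bypasses both the generating-set lemma and the modular-function bookkeeping --- a small but real streamlining. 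Two points you should make explicit in a final write-up: the containment $K'\subseteq S^m$ needs a Baire-category argument, since your $S$, unlike the open sets produced by \cref{lem:S.gen}, is merely compact; and your Milnor--\v{S}varc step requires the graphs in \eqref{item:exists.quasi} and \eqref{item:all.quasi} to be connected and locally finite, an assumption the paper also makes implicitly (it is a hypothesis of \cref{thm:sw}).
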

The implication \eqref{item:exists.VT} $\implies$ \eqref{item:exists.quasi} is trivial, and we discussed the implication \eqref{item:exists.quasi} $\implies$ \eqref{item:exists.Lip} above. The implication \eqref{item:all.quasi} $\implies$ \eqref{item:exists.VT} follows from Abels's theorem, whilst of course the equivalence \eqref{item:amen+unimod} $\iff$ \eqref{item:geom.amen} is a special case of \cref{lem:amen-unimod}.

The equivalence of \eqref{item:amen+unimod}, \eqref{item:exists.quasi} and \eqref{item:all.quasi} recovers the Soardi--Woess--Salvatori theorem. However, we prove the implication \eqref{item:amen+unimod} $\implies$ \eqref{item:all.quasi} via the following version of the Soardi--Woess--Salvatori theorem, in particular giving a more direct proof of that result than any previous reference we are aware of.
\begin{theorem}\label{thm:sw}
Suppose $\Gamma$ is a connected, locally finite graph, and that $G$ is locally compact group admitting a proper quasitransitive action on $\Gamma$. Then $\Gamma$ is amenable if and only if $G$ is amenable and unimodular.
\end{theorem}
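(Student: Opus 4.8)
The plan is to deduce the theorem from \cref{lem:amen-unimod}: since $G$ is amenable and unimodular if and only if it is geometrically amenable, it suffices to prove that $\Gamma$ is amenable if and only if $G$ is geometrically amenable. The whole argument runs cleanest when the action is \emph{transitive}, so I would first reduce to that case. Writing $D=\max_i d(x_i,G\cdot x_1)$ over a set $x_1,\dots,x_k$ of orbit representatives, the $G$-invariance of the graph metric forces every vertex to lie within distance $D$ of the single orbit $O:=G\cdot x_1$ (indeed $d(g\cdot x_i,O)=d(x_i,O)$ since $g$ is an isometry and $O$ is $G$-invariant). Hence $O$ is uniformly coarsely dense, and the graph $\Gamma'$ with vertex set $O$ in which $u\sim u'$ whenever $0<d_\Gamma(u,u')\le 2D+1$ is connected, locally finite, of bounded degree, acted on transitively and properly by $G$, and quasi-isometric to $\Gamma$. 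Since amenability of a bounded-degree graph is a quasi-isometry invariant, $\Gamma$ is amenable if and only if $\Gamma'$ is, and I may assume the action is transitive.

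Assuming transitivity, fix $x_0$, put $K=\Stab(x_0)$ (a compact open subgroup, so $0<\mu(K)<\infty$), and identify the vertex set with $G/K$ via $gK\mapsto g\cdot x_0$. For a finite vertex set $A$ let $U_A=\{g\in G:g\cdot x_0\in A\}$, a disjoint union of $|A|$ cosets of $K$, so $\mu(U_A)=|A|\,\mu(K)$. Let $T=\{g\in G:g\cdot x_0\in B(x_0,1)\}$; using that $G$ acts by isometries one checks that $T$ is symmetric, compact, open, satisfies $KT=TK=T$, and generates $G$ (the latter because $\Gamma$ is connected). The computation at the heart of the argument is that for every finite $A$ and every $r\ge0$,
\[
U_A T^{r}=U_{B(A,r)},\qquad\text{so}\qquad \frac{\mu(U_A T^{r})}{\mu(U_A)}=\frac{|B(A,r)|}{|A|};
\]
the inclusion $\subseteq$ holds because each $g\in U_A$ acts isometrically, and $\supseteq$ follows by moving a vertex $w\in B(A,r)$ back to within distance $r$ of $x_0$.

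From here both directions are short. If $G$ is geometrically amenable, then for any $\eps>0$ there is a compact $U$ with $\mu(UT)/\mu(U)\le1+\eps$; replacing $U$ by $UK$ only decreases the ratio (as $UKT=UT$ while $\mu(UK)\ge\mu(U)$), so I may take $U=U_A$, and the case $r=1$ of the identity gives $|\partial A|/|A|\le\eps$, whence $\h(\Gamma)=0$. Conversely, if $\Gamma$ is amenable, choose finite sets $A_n$ with $|\partial A_n|/|A_n|\to0$. Since $\Gamma$ has bounded degree $M$, an elementary induction (each vertex at distance exactly $s$ from $A$ has a neighbour at distance $s-1$, so the sphere of radius $s$ has size at most $M$ times that of radius $s-1$) yields $|B(A,r)\setminus A|\le\frac{M^{r}-1}{M-1}|\partial A|$, so $|B(A_n,r)|/|A_n|\to1$ for every fixed $r$. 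By the identity $\mu(U_{A_n}T^{r})/\mu(U_{A_n})\to1$ for each $r$; since $T$ is a compact open generating set every compact $K'\subseteq G$ lies in some $T^{r}$, and then $\mu(U_{A_n}K')/\mu(U_{A_n})\le\mu(U_{A_n}T^{r})/\mu(U_{A_n})\to1$. Thus $G$ is geometrically amenable.

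The step I expect to be most delicate is upgrading the F\o lner condition for the single generating set $T$ to full geometric amenability, and in particular recovering unimodularity, which by the discussion preceding \cref{lem:amen-unimod} is equivalent to the failure of this condition whenever $G$ is non-unimodular. This is exactly what the exact identity $U_AT^{r}=U_{B(A,r)}$ buys: it converts the multi-scale control $|B(A_n,r)|/|A_n|\to1$, supplied for free by the bounded-degree sphere estimate, into control of $\mu(U_{A_n}K')/\mu(U_{A_n})$ for arbitrary compact $K'$, so that no separate treatment of $\Delta_G$ is needed. The other point requiring care is the reduction to the transitive case; I would either invoke quasi-isometry invariance of graph amenability as above, or, if a self-contained argument is preferred, compare F\o lner sets of $\Gamma$ and $\Gamma'$ directly using that $O$ is uniformly coarsely dense.
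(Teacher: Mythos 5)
Your proposal is correct and follows essentially the same route as the paper: your identity $U_AT^{r}=U_{B(A,r)}$ is precisely \cref{lem:S.gen}, your two directions in the transitive case reproduce the proof of \cref{prop:trans}, the reduction to a transitive action via a bounded-distance graph on a single orbit is \cref{prop:reduc}, and the final appeal to geometric amenability is \cref{lem:amen-unimod}. The only (harmless) divergence is that you retain the possibly non-faithful action of $G$ on $\Gamma'$ rather than passing to the quotient $G/H$ by its kernel as the paper does, which is legitimate since the paper's notion of properness asks only that vertex stabilisers be compact in $G$.
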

We prove the one outstanding implication of \cref{thm:amen+unimod}, \eqref{item:exists.Lip} $\implies$ \eqref{item:geom.amen}, via the following result, which we believe to be completely new.
\begin{theorem}\label{thm:LipActionOnAmenable}
Suppose $G$ is a compactly generated, totally disconnected, locally compact group admitting a proper Lipschitz action on a bounded-degree amenable graph. Then $G$ is geometrically amenable.
\end{theorem}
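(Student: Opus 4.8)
The plan is to fix a single orbit and transfer a F\o lner set of the amenable graph to a right F\o lner set of $G$. Fix a vertex $o$ in an orbit $O=G\cdot o$ on which the orbit map $\pi\colon g\mapsto g\cdot o$ is $C$-Lipschitz with respect to the compact symmetric generating set $S$, and set $W=\Stab(o)$, a compact open subgroup, normalised so that $\mu(W)=1$. The fibres of $\pi$ are the left cosets of $W$, all of measure $1$, so for a finite set $A\subseteq O$ the compact set $U_A:=\pi^{-1}(A)=\{g:g\cdot o\in A\}$ has $\mu(U_A)=|A|$. First I would record the two geometric inputs that make the orbit tractable: the hypothesis gives $d_\Gamma(o,s\cdot o)\le C$ for all $s\in S$, while properness (the set $B_\Gamma(o,C)\cap O$ is finite, so $\pi^{-1}(B_\Gamma(o,C))$ is a compact union of cosets) together with compact generation (every compact set lies in some $S$-ball) shows conversely that $d_\Gamma(g\cdot o,g'\cdot o)\le C$ forces $d_S(g,g')$ to be bounded. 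Consequently $\pi$ is a quasi-isometry from $(G,S)$ onto $O$ equipped with the graph $\Gamma_C'$ in which $u\sim v$ iff $0<d_\Gamma(u,v)\le C$; in particular $\Gamma_C'$ is connected, locally finite, and carries a proper transitive action of $G$.

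The pullback estimate is then immediate. For $s\in S$ and $g\in U_As\setminus U_A$ we have $g\cdot(s^{-1}o)\in A$ but $g\cdot o\notin A$, and since $d_\Gamma(o,s^{-1}o)\le C$ the automorphism $g$ gives $d_\Gamma(g\cdot o,g\cdot(s^{-1}o))\le C$; hence $g\cdot o$ lies in the $\Gamma_C'$-boundary of $A$. As this containment is independent of $s$, we obtain $U_AS\setminus U_A\subseteq\pi^{-1}(\partial_{\Gamma_C'}A)$ and therefore
\[
\frac{\mu(U_AS\setminus U_A)}{\mu(U_A)}\le\frac{|\partial_{\Gamma_C'}A|}{|A|}.
\]
Since it suffices to verify the right F\o lner condition against the single generating set $S$ (iterating this bound over $S$-balls and using that $\Gamma_C'$ has bounded degree controls $\mu(U_AK)$ for every compact $K$), this reduces \cref{thm:LipActionOnAmenable} to producing finite $A\subseteq O$ with $|\partial_{\Gamma_C'}A|/|A|$ arbitrarily small, that is, to proving that $\Gamma_C'$ is \emph{amenable}.

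The remaining step---that amenability of $\Gamma$ is inherited by the orbit graph $\Gamma_C'$---is the main obstacle, and it is where the amenability of $\Gamma$ is genuinely used. When the action is quasitransitive this is easy: $d_\Gamma(\cdot,O)$ is constant on orbits and hence bounded, so $O$ is coarsely dense, $\Gamma_C'$ is quasi-isometric to $\Gamma$, and amenability is a quasi-isometry invariant. Under the mere Lipschitz hypothesis $O$ may meet every F\o lner set of $\Gamma$ in a vanishing proportion (already the horizontal-axis orbit of $\Z\le\Aut(\Z^2)$ behaves this way), so one cannot simply intersect an ambient F\o lner set with $O$. The hard part will be to exploit that $\Gamma_C'$ is not an arbitrary subset but a single $G$-orbit, transporting the ambient F\o lner condition onto it by a mass-transport argument built from the stabiliser weight $W(v)=\mu(\Stab v)$, which satisfies $W(u)/W(v)\in[D^{-1},D]$ for adjacent $u,v$ in $\Gamma$ (with $D$ the degree bound) and $W(g\cdot v)=\Delta_G(g)^{-1}W(v)$; the cancellation forced by amenability of $\Gamma$ should simultaneously yield amenability of $\Gamma_C'$ and triviality of $\Delta_G$.

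Once $\Gamma_C'$ is known to be amenable, the proof concludes in two equivalent ways: directly, the displayed estimate furnishes the required right F\o lner sets, so $G$ is geometrically amenable; alternatively, applying \cref{thm:sw} to the proper transitive action of $G$ on $\Gamma_C'$ shows $G$ is amenable and unimodular, whence geometric amenability by \cref{lem:amen-unimod}. I expect the mass-transport transfer in the third step to be the technical heart of the argument, the reduction and pullback in the first two steps being routine once the quasi-isometry is in place.
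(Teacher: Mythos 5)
Your first three steps are sound, and in fact they closely parallel tools the paper itself uses (\cref{lem:S.gen} and \cref{lem:quantitativeVT}): the orbit graph $\Gamma_C'$ is indeed connected, locally finite, of bounded degree, and carries a proper transitive continuous action of $G$, and your pullback estimate $\mu(U_AS\setminus U_A)/\mu(U_A)\le|\partial_{\Gamma_C'}A|/|A|$ is correct. But the whole proof then rests on the claim that $\Gamma_C'$ is amenable, and this is precisely the step you do not prove: the ``mass-transport argument built from the stabiliser weight'' is a hope, not an argument, and you acknowledge as much. This gap is not peripheral; it is the entire content of the theorem. Indeed, the way you have framed it, the missing lemma is \emph{equivalent} to the theorem: your reduction shows that amenability of $\Gamma_C'$ implies geometric amenability of $G$, while conversely the only visible route to amenability of the \emph{fixed} orbit graph $\Gamma_C'$ is to first know that $G$ is amenable and unimodular and then apply \cref{thm:sw} to the proper transitive action on $\Gamma_C'$. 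You correctly identify the obstruction --- a single orbit $O$ may carry a vanishing proportion of every F\o lner set of $\Gamma$ --- but with infinitely many orbits the stabiliser-weight bookkeeping that works in the quasitransitive (BLPS) setting has no evident substitute, since an ambient F\o lner set may place almost all of its mass on orbits other than the one you fixed.

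The paper's proof (\cref{prop:LipActionOnAmenable}) avoids this trap by \emph{not} fixing the orbit in advance. After reducing to a $1$-Lipschitz action (adding edges between vertices at distance at most $C$, which is where bounded degree is used to preserve amenability), one deletes every edge of $\Gamma$ joining two distinct orbits, obtaining $Y=\bigsqcup_z Y_z$ with each $Y_z$ a connected vertex-transitive graph (\cref{lem:almostLipImpliesCoarseConnected}), onto which the orbit maps are still $1$-Lipschitz because edges inside each orbit are kept. Deleting edges can only shrink exterior boundaries, so a F\o lner set $F$ of $\Gamma$ satisfies $|\partial^YF|\le|\partial F|\le\eps|F|$; and since $Y$ has no inter-orbit edges, both $F$ and $\partial^YF$ decompose orbit-by-orbit, so by pigeonhole \emph{some} piece $F_z$ satisfies $|\partial^YF_z|/|F_z|\le\eps$, with the orbit $z$ depending on $\eps$. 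Applying your pullback estimate (the paper's \cref{lem:quantitativeVT}) to $Y_z$ then yields, for each $\eps$, a compact open $A\subseteq G$ with $\mu(AS\setminus A)\le\eps\mu(A)$, and \cref{lem:geom.amen.gen.set} finishes. The key realisation is that geometric amenability only requires a good set for each $\eps$ separately, so the witnessing orbit may vary with $\eps$: no single orbit graph ever needs to be proved amenable, and no mass transport is needed. To repair your write-up, replace your third step by this deletion-plus-pigeonhole argument.
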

 If one is willing to replace Lipschitz by $1$-Lipschitz, then we can drop the assumption that the graph must have bounded degree (see the first part of \cref{thm:geo.amen<->action.simple}).
Note that, thanks to the absence of any quasitransitivity assumption, \cref{thm:sw} is a significant strengthening of one direction of the Soardi--Woess--Salvatori theorem.

We prove \cref{thm:sw} in \cref{sec:sw}. \cref{thm:LipActionOnAmenable} follows from the first part of \cref{thm:geo.amen<->action.simple}, which itself results from \cref{prop:LipActionOnAmenable}.

\begin{remark}\label{rem:properness}
The properness of the actions in statements \eqref{item:exists.Lip}--\eqref{item:all.quasi} of \cref{thm:amen+unimod} cannot be removed. On the one hand, if $\Gamma$ is Cayley graph of degree $d$ on some group $H$, and $G$ is a free group of rank greater than $d$, then one may define a transitive (and hence quasitransitive and Lipschitz) action of the free group $G$ on $\Gamma$ by projecting $F_r\to H$ and letting $H$ act on $\Gamma$ by translations. In particular, amenability of $\Gamma$ in this instance does not imply amenability of $G$. Conversely, defining them as HNN-extensions, one can let the lamplighter and solvable Baumslag--Solitar groups act faithfully and transitively on regular trees (see for instance \cite{meier}), so that amenability and unimodularity of $G$ does not preclude the existence of improper transitive actions of $G$ on nonamenable graphs.
\end{remark}

\begin{remark}
Statement \eqref{item:all.quasi} of \cref{thm:amen+unimod} cannot be strengthened to say that every bounded-degree graph admitting a proper \emph{Lipschitz} action of $G$ is amenable.
For example, if $G$ is a finitely generated group and $T$ is a tree then the obvious action of $G$ on $G\times T$ is both proper and Lipschitz.
\end{remark}

In light of the lamplighter and Baumslag--Solitar examples described in \cref{rem:properness}, one might reasonably wonder whether Sol admits a faithful, transitive action on a regular tree. In \cref{prop:Sol} we show that it does not, in fact, admit any quasitransitive action on any non-amenable locally finite graph.

\subsection*{Uniform non-amenability}
Arzhantseva, Burillo, Lustig, Reeves, Short and Ventura \cite{ablrsv} define a uniform notion of non-amenability for finitely generated groups (Osin \cite{osin.weak.amen} considers a related notion called \emph{weak amenabilty}). In this paper we extend this definition to locally compact groups, and to geometric amenability. First, given a compactly generated locally compact group $G$, we follow Arzhantseva et.\ al.\ in defining
\[
\Fol G=\inf_S\inf_U\frac{\mu(SU\setminus U)}{\mu(U)},
\]
where the infima are over all compact symmetric generating sets $S$ for $G$ and all compact subsets $U\subseteq G$. (In fact, this differs slightly from Arzhantseva et.\ al.'s definition in that they consider the interior boundary, where we consider the exterior boundary.) If $G$ is amenable then $\Fol G=0$ by definition; we call a group $G$ satisfying $\Fol G>0$ \emph{uniformly non-amenable}. 


In the context of the present work it is natural to define analogously uniform \emph{geometric} non-amenability. Given a compactly generated locally compact group $G$, we therefore set
\[
\Fol^\ast G=\inf_S\inf_U\frac{\mu(US\setminus U)}{\mu(U)},
\]
where again the infima are over all compact symmetric generating sets $S$ for $G$ and all compact subsets $U\subseteq G$. If $G$ is geometrically amenable then $\Fol^\ast G=0$ by definition, and we call a group $G$ \emph{uniformly geometrically non-amenable} if $\Fol^\ast G>0$.

Recall that a connected Lie group is generated by any neighbourhood of the identity. It follows that if $G$ is such a group then $\Fol G=0$ and $\Fol^\ast G=0$; indeed, if $U$ is any compact subset of positive measure, and $(S_n)_{n=1}^\infty$ is a sequence of compact symmetric neighbourhoods of the identity converging to the identity, then
\[
\frac{\mu(S_nU\setminus U)}{\mu(U)}\to0,\qquad\qquad\frac{\mu(US_n\setminus U)}{\mu(U)}\to0.
\]
These notions are therefore more appropriately studied in the setting of totally disconnected locally compact groups.

It turns out that, in that setting, these notions relate to unimodularity and the presence of certain actions on graphs in a number of ways that are strongly analogous to \cref{thm:amen+unimod}. For example,
the following statement (which we prove in a more detailed form in \cref{prop:deltaDenseAlmostAm}) shows that in an amenable group $G$, uniform geometric non-amenability can be characterised in terms of the modular homomorphism, just as geometric amenability can be by the equivalence \eqref{item:amen+unimod} $\iff$ \eqref{item:geom.amen}.
\begin{theorem}\label{thm:deltaDense.simple}
Suppose $G$ is a compactly generated totally disconnected locally compact group. Suppose further that $G$ is amenable and non-unimodular. Then $\Fol^\ast G=0$ if and only if the image of $G$ under the modular homomorphism is dense in $\R^*_+$.
\end{theorem}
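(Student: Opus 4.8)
The starting point is that, since $G$ is compactly generated by a compact symmetric set $S_0$, the image $\Delta_G(G)$ is the subgroup of $\R^*_+$ generated by the compact set $\Delta_G(S_0)$, which in logarithmic coordinates is bounded. A subgroup of $\R$ generated by a bounded set is either discrete, hence cyclic, or dense; and since $G$ is non-unimodular it is nontrivial. So there are exactly two cases, namely $\Delta_G(G)=r^\Z$ for some $r>1$ or $\Delta_G(G)$ dense, and the plan is to show $\Fol^\ast G>0$ in the first and $\Fol^\ast G=0$ in the second. Note that amenability will be needed only in the dense case.

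The cyclic case is short and uses neither amenability nor the totally disconnected hypothesis. For any compact symmetric generating set $S$ the set $\Delta_G(S)$ is symmetric and generates $r^\Z$, so it contains a value at least $r$; hence $\mu(US)\ge\max_{s\in S}\mu(Us)=\bigl(\max_{s\in S}\Delta_G(s)\bigr)\mu(U)\ge r\mu(U)$ for every compact $U$ of positive measure. Since $\mu(US\setminus U)\ge\mu(US)-\mu(U)$, this gives $\mu(US\setminus U)/\mu(U)\ge r-1$ for all $S$ and $U$, whence $\Fol^\ast G\ge r-1>0$.

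In the dense case I would fix $\eps>0$ and first use density to build a compact symmetric generating set $S$ with $\log\Delta_G(S)\subseteq(-\eps,\eps)$. Being totally disconnected, $G$ has a compact open subgroup $V$, on which $\Delta_G$ is trivial (its image being a compact subgroup of $\R^*_+$), and I write $G=\langle V\cup\{g_1,\dots,g_k\}\rangle$. Choosing $h$ with $\log\Delta_G(h)\in(0,\eps)$ (possible by density), the integer multiples of $\log\Delta_G(h)$ form a net of spacing less than $\eps$, so I may pick integers $n_i$ with $\Delta_G(g_ih^{-n_i})\in(e^{-\eps},e^{\eps})$; then $S:=V\cup\{h^{\pm1}\}\cup\{(g_ih^{-n_i})^{\pm1}\}$ generates $G$ and has $\log\Delta_G(S)\subseteq(-\eps,\eps)$. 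With $S$ fixed, amenability provides a left-$V$-invariant left-Følner set $F$ for $S$, of some quality $\delta$, and the plan is to confine $\Delta_G$ on a suitable piece of $F$. I would partition $G$ into the level slabs $\{a\le\log\Delta_G<a+T\}$ for $a\in T\Z$, with $\eps\ll T\ll1$, and seek a good slab $F':=F\cap\{a\le\log\Delta_G<a+T\}$. Because $\log\Delta_G(S)\subseteq(-\eps,\eps)$, left multiplication by $s\in S$ moves a point out of $F'$ only if it already left $F$ or lay in one of the two boundary layers of width $\eps$ of the slab; averaging the resulting bound over all slabs spreads the fixed Følner defect $\mu(sF\setminus F)$ thinly across the whole range of $\log\Delta_G$ on $F$, so that no a priori bound on that range is needed, and produces a slab for which $F'$ is left-Følner for $S$ of quality $O(\delta+\eps/T)$.

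Finally I would set $U:=(F')^{-1}$. A direct computation using the symmetry of $S$ shows that $U$ is right-Følner for $S$ with respect to the right Haar measure $\mu_r$ (given by $\mu_r(B)=\mu(B^{-1})$, so that $d\mu=\Delta_G\,d\mu_r$), of the same quality; and since $\log\Delta_G$ is confined to an interval of length at most $T+\eps$ on $U\cup US$, the factor $\sup\Delta_G/\inf\Delta_G\le e^{T+\eps}$ converts this into $\mu(US\setminus U)/\mu(U)\le e^{T+\eps}\,O(\delta+\eps/T)$ (left-$V$-invariance of $F'$ making $U$ right-$V$-invariant, so the part $V\subseteq S$ contributes no boundary). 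Taking $\delta=\eps$ and $T=\sqrt\eps$ makes the right-hand side $O(\sqrt\eps)$, and letting $\eps\to0$ gives $\Fol^\ast G=0$. The main obstacle is this slab step: one must run the averaging argument intrinsically on $G$, partitioning by the value of the continuous homomorphism $\Delta_G$ rather than pushing forward to the dense, measure-zero image in $\R$, and verify that a typical slab simultaneously carries a definite share of the mass and has negligible boundary layers. Balancing $T$ against $\delta$ so that the confinement factor $e^{T}$ and the boundary term $\eps/T$ are controlled at once is the quantitative heart of the argument.
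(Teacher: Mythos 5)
Your discrete-image direction is exactly the paper's argument (Proposition 6.1 there): any compact symmetric generating set $S$ must contain an element $s$ with $\Delta_G(s)\ge r$, whence $\mu(US\setminus U)\ge\mu(Us)-\mu(U)=(r-1)\mu(U)$ for every compact $U$ of positive measure. As you say, this needs neither amenability nor total disconnectedness.

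Your dense direction takes a genuinely different route from the paper's (Proposition 6.3 there). The paper factors $\Delta_G$ through a discrete finitely generated free abelian quotient $\pi:G\to A\cong\Z^d$ (possible because a compact open subgroup has trivial image under $\Delta_G$), uses density to pick a basis $x_1,\dots,x_d$ of $A$ whose modular values are arbitrarily close to $1$, and then builds right F\o lner sets \emph{directly} as products $F_n=Q_nK_n$, where $K_n$ is the box $j([-n,n]^d\cap\Z^d)$ and $Q_n$ is a right F\o lner set in the open, unimodular, amenable---hence geometrically amenable, by Lemma 1.1---kernel $\ker\Delta_G$, chosen large enough to absorb a compact set of conjugation and commutation corrections; the decisive estimate is the geometric-series inequality $v^{n+1}\le\frac1n\sum_{i=-n}^nv^i$ for $v$ near $1$. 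You instead start from a \emph{left} F\o lner set supplied by amenability, cut it along level sets of $\Delta_G$, and invert, using near-constancy of $\Delta_G$ on a slab to convert left F\o lner into right F\o lner at cost $e^{T+\eps}$. Your construction of $S$ with $\log\Delta_G(S)\subseteq(-\eps,\eps)$ (cover a compact generating set by finitely many cosets of the compact open subgroup $V$ and correct each representative by a power of $h$) is sound, as are the inversion computation and the reduction of $\mu(US\setminus U)$ to finitely many per-element bounds via right-$V$-invariance of $U$; note that the number of non-$V$ generators is independent of $\eps$, which is what makes that last reduction legitimate.

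The one genuine gap is the slab-selection step, and as literally stated it fails. With the fixed grid $T\Z$, averaging over slabs does control the total contribution of the F\o lner defect $\mu(sF\setminus F)$ (each point lies in $O(1)$ extended slabs), but it does \emph{not} control the total mass of the boundary layers: summed over all slabs this is comparable to $\mu\bigl(F\cap\{d(\log\Delta_G,T\Z)<\eps\}\bigr)$, and nothing prevents $F$ from concentrating its $\log\Delta_G$-values near the grid, in which case this is comparable to $\mu(F)$ rather than to $(\eps/T)\mu(F)$, and the claimed quality $O(\delta+\eps/T)$ for the best slab is false. The standard repair is to average over \emph{translates} of the grid as well: use slabs $\{a+\theta\le\log\Delta_G<a+\theta+T\}$ with $a\in T\Z$ and $\theta\in[0,T)$. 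By Fubini, for each point of $F$ the set of offsets $\theta$ placing it within $\eps$ of the shifted grid has measure $2\eps$, so some $\theta$ makes the total layer mass at most $(2\eps/T)\mu(F)$; running your slab average at that offset, simultaneously for the finitely many non-$V$ generators, then yields a slab of positive measure and quality $O(\delta+\eps/T)$, and the rest of your argument (with $\delta=\eps$, $T=\sqrt\eps$) goes through. With that fix your proof is correct, and it is a softer alternative to the paper's: you avoid the reduction to $\Z^d$ and the bookkeeping with the correction sets $X_n,L_n,M_n$, while the paper's explicit construction yields extra information (the structured generating sets $S_n=K\sqcup T_n$ in the split case of its Proposition 6.3) that your argument does not.
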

The following result, on the other hand, is directly analogous to the equivalence \eqref{item:geom.amen} $\iff$ \eqref{item:exists.VT}.
\begin{theorem}\label{prop:almost.amen.trans.seq}
Let $G$ be a compactly generated locally compact totally disconnected group. Then $\Fol^\ast G=0$ if and only if there exists a sequence of $G$-transitive proper locally finite graphs $\Gamma_n$ which are asymptotically amenable in the sense that there exists a sequence $A_n\subseteq\Gamma_n$ such that $|\partial A_n|/|A_n|\to 0$.
\end{theorem}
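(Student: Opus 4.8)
The plan is to pass through the standard dictionary between connected $G$-transitive proper locally finite graphs and \emph{Cayley--Abels data}. Recall from Abels's theorem and the surrounding discussion that such a graph may be identified with $G/K$ for a compact open subgroup $K$ (the stabiliser of a base vertex), the adjacency being recorded by a symmetric, compact, bi-$K$-invariant generating set $T=KTK$ with $K\subseteq T$ via $gK\sim hK\iff g^{-1}h\in T\setminus K$; conversely every such pair $(K,T)$ yields a connected $G$-transitive proper locally finite graph $\Gamma(K,T)$ (I take the graphs in the statement to be connected, which is what makes the adjacency set generate $G$). The first step is to record the exact correspondence between vertex boundaries and right F\o lner quotients: if $A\subseteq G/K$ is finite and $\tilde A\subseteq G$ is its compact, right-$K$-invariant preimage, then a short computation identifies the preimage of $\partial A$ with $\tilde A T\setminus\tilde A$, so that, writing $\mu$ for Haar measure,
\[
\frac{|\partial A|}{|A|}=\frac{\mu(\tilde A T\setminus\tilde A)}{\mu(\tilde A)}.
\]
This is exactly the correspondence underlying the equivalence \eqref{item:geom.amen} $\iff$ \eqref{item:exists.VT} of \cref{thm:amen+unimod}; here I need its \emph{uniform} form, keeping track of $T$ and $\tilde A$ simultaneously.

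For the ``if'' direction I would argue directly. Given a sequence $\Gamma_n$ of connected $G$-transitive proper locally finite graphs together with sets $A_n$ witnessing $|\partial A_n|/|A_n|\to0$, write each $\Gamma_n=\Gamma(K_n,T_n)$ and let $\tilde A_n$ be the preimage of $A_n$. Then $T_n$ is an admissible symmetric compact generating set and $\tilde A_n$ an admissible compact set in the definition of $\Fol^\ast G$, so the displayed identity gives $\Fol^\ast G\le\mu(\tilde A_nT_n\setminus\tilde A_n)/\mu(\tilde A_n)=|\partial A_n|/|A_n|\to0$, whence $\Fol^\ast G=0$.

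The substance is the ``only if'' direction, where I must manufacture genuine Cayley--Abels graphs out of arbitrary F\o lner data. Suppose $\Fol^\ast G=0$ and choose compact symmetric generating sets $S_n$ and compact sets $U_n$ of positive measure with $\mu(U_nS_n\setminus U_n)<\eps_n\mu(U_n)$, where $\eps_n\to0$; we may assume $e\in S_n$, which leaves this ratio unchanged. The pairs $(S_n,U_n)$ need not be $K$-invariant for any compact open $K$, so the plan is to replace them by invariant data at a controlled cost: fix $n$, and for a compact open subgroup $K$ set $T:=KS_nK$ (symmetric, compact, bi-$K$-invariant, containing $K$, and generating since it contains $S_n$) and $\tilde A:=U_nK$ (compact, right-$K$-invariant). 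Since $\tilde A\subseteq\tilde A T$ we have
\[
\frac{\mu(\tilde A T\setminus\tilde A)}{\mu(\tilde A)}=\frac{\mu(U_nKS_nK)-\mu(U_nK)}{\mu(U_nK)}\le\frac{\mu(U_nKS_nK)-\mu(U_n)}{\mu(U_n)}.
\]
The key point is that, as $K$ runs over compact open subgroups directed downward by intersection, the sets $U_nKS_nK$ decrease to $U_nS_n$: one has $U_nS_n\subseteq U_nKS_nK$ always, while any $x\in\bigcap_KU_nKS_nK$ lies in $U_nS_n$ by extracting convergent subnets from representations $x=u\,k\,s\,k'$ with $k,k'\to e$ and using continuity of multiplication together with compactness of $U_n$ and $S_n$. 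Downward continuity of $\mu$ along this net (every member has finite measure) then yields $\mu(U_nKS_nK)\to\mu(U_nS_n)$, so for $K=K_n$ chosen small enough the numerator above is at most $\mu(U_nS_n)-\mu(U_n)+\eps_n\mu(U_n)<2\eps_n\mu(U_n)$. Setting $\Gamma_n:=\Gamma(K_n,K_nS_nK_n)$ and $A_n:=U_nK_n/K_n$ then produces a sequence of connected $G$-transitive proper locally finite graphs with $|\partial A_n|/|A_n|<2\eps_n\to0$, as required. I expect the main obstacle to be precisely this approximation step — in particular verifying $\bigcap_KU_nKS_nK=U_nS_n$ and the attendant measure convergence — since everything else reduces to bookkeeping through the dictionary.
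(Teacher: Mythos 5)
Your proposal is correct, and in substance it is the same argument as the paper's: the paper deduces this theorem from the sharper identity $\h_G=\Fol^\ast G$ (\cref{prop:Folstar.versus.h}), whose proof consists of exactly your two ingredients --- the dictionary of \cref{lem:S.gen} between vertex boundaries in a $G$-transitive proper graph and right F\o lner quotients in $G$ (your ``if'' direction), and, for the converse, the replacement of arbitrary F\o lner data $(S_n,U_n)$ by bi-$K$-invariant data $(K S_n K,\,U_n K)$ for a sufficiently small compact open subgroup $K$ provided by van Dantzig's theorem, followed by passage to the Cayley--Abels graph $G/K$ with generating set $KS_nK$.

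The one step that needs repair is precisely the step you flagged. The set-theoretic identity $\bigcap_K U_nKS_nK=U_nS_n$ is fine (your subnet argument implicitly uses van Dantzig's theorem, i.e.\ that compact open subgroups form a neighbourhood basis of the identity, to get $k_K\to e$). But the measure statement does not follow from ``downward continuity of $\mu$ along this net (every member has finite measure)'': continuity from above under a finiteness hypothesis is a fact about decreasing \emph{sequences}, and it fails for decreasing \emph{nets} of finite-measure Borel sets (consider the net of all cofinite subsets of $[0,1]$); moreover a compactly generated totally disconnected group need not be metrisable, so you cannot reduce to sequences. What rescues the step is that the sets $U_nKS_nK$ are compact and Haar measure is outer regular: choose an open set $V\supseteq U_nS_n$ with $\mu(V)\le\mu(U_nS_n)+\eps_n\mu(U_n)$; then the sets $U_nKS_nK\setminus V$ form a downward-directed family of compact sets with empty intersection, so some member is empty, i.e.\ $U_nKS_nK\subseteq V$ for some compact open $K$, which gives the bound you want. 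This is in effect how the paper argues: by regularity it first fixes an open $W$ with $\mu(FSW\setminus FS)\le\eps\mu(FS)$, then uses continuity of multiplication and compactness of $F$ and $S$ to find a neighbourhood $U$ of the identity with $FU\cdot SU\subseteq FSW$, and finally takes a compact open subgroup $K\subseteq U$, so that $FKSK\subseteq FSW$. With that substitution your proof is complete; the only remaining (cosmetic) difference is that the paper keeps the estimates quantitative so as to obtain the equality $\h_G=\Fol^\ast G$ rather than just the qualitative equivalence stated here.
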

\cref{prop:almost.amen.trans.seq} actually follows from the more refined \cref{prop:Folstar.versus.h}, below.

Finally, we have an analogue of the equivalence \eqref{item:geom.amen} $\iff$ \eqref{item:exists.Lip}. To state it requires a further definition. Suppose $G$ is a compactly generated locally compact group acting properly on a locally finite graph $\Gamma$. Given $C>0$, we say that this action is \emph{contingently $C$-Lipschitz} if for every $x\in\Gamma$ there exists a compact symmetric generating subset $S_x\subset G$ such that the orbit map
 \[
\begin{array}{ccc}
(G,S_x)&\to&G\cdot x\\
g&\mapsto &g\cdot x
\end{array}
\]
is $C$-Lipschitz. We will say that the action is \emph{contingently Lipschitz} to mean that there exists some $C>0$ such that the action is contingently $C$-Lipschitz.

\begin{theorem}\label{thm:geo.amen<->action.simple}
Suppose $G$ is a compactly generated totally disconnected locally compact group. Then
\begin{itemize}
\item $G$ is geometrically amenable if and only if it admits a $1$-Lipschitz proper action on a locally finite amenable graph; and
\item $\Fol^\ast G=0$ if and only if $G$ admits a contingently $1$-Lipschitz proper action on a locally finite amenable graph.
\end{itemize}
\end{theorem}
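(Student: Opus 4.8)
The plan is to prove each ``there exists an action'' direction by an explicit construction and each converse by pulling an almost-invariant vertex set back through an orbit map, and everything rests on one bookkeeping identity. Fix a compact open subgroup $K\le G$ and a compact generating set $S=S^{-1}$ with $K\subseteq S=KSK$, and form the Cayley--Abels graph on the vertex set $G/K$ by joining $gK$ to $hK$ whenever $gK\ne hK$ and $g^{-1}h\in S$. This graph is connected, locally finite, and $G$ acts on it transitively and properly, with the orbit map at $eK$ being $1$-Lipschitz with respect to $S$ (each generator sends $eK$ to a neighbour or to itself). Writing $\pi\colon G\to G/K$ and $U=\pi^{-1}(A)$ for a finite $A\subseteq G/K$, one checks that $\pi^{-1}(A\cup\partial A)=US$, so that $|\partial A|/|A|=\mu(US\setminus U)/\mu(U)$; and a short saturation argument (replacing a compact $U$ by the coset union $UK$, which only decreases the ratio because $\mu(UKS\setminus UK)=\mu(US\setminus UK)\le\mu(US\setminus U)$ while $\mu(UK)\ge\mu(U)$) shows that the Cheeger constant equals $\inf_U\mu(US\setminus U)/\mu(U)$, the infimum now over all compact $U$ of positive measure.

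\textbf{Forward directions.} If $G$ is geometrically amenable then, specialising the definition to $K=S$, we get $\inf_U\mu(US\setminus U)/\mu(U)=0$, so the graph above is amenable and carries a $1$-Lipschitz proper action; this gives the first bullet. For the second, I would invoke \cref{prop:almost.amen.trans.seq}: since $\Fol^\ast G=0$, there is a sequence $\Gamma_n$ of $G$-transitive proper locally finite (and, by passing to a component if necessary, connected) graphs with finite sets $A_n$ satisfying $|\partial A_n|/|A_n|\to0$. Each $\Gamma_n$ is $1$-Lipschitz for the generating set $\{g\in G:d(x,gx)\le1\}$ at any basepoint $x$, so the disjoint union $\Gamma=\bigsqcup_n\Gamma_n$ carries a proper contingently $1$-Lipschitz action; and $\h(\Gamma)=\inf_n\h(\Gamma_n)\le\inf_n|\partial A_n|/|A_n|=0$, so $\Gamma$ is amenable.

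\textbf{Reverse directions (the heart).} Here I would use the Lipschitz hypothesis directly, with no recourse to Cayley--Abels. Fix a generating set $S$ and, in each orbit $O$, a basepoint $x_O$ with $d(x_O,sx_O)\le1$ for all $s\in S$, and set $\phi_O\colon g\mapsto gx_O$. Properness makes $\phi_O^{-1}(\{v\})$ a single coset of $\Stab(x_O)$, so by left-invariance of $\mu$ one has $\mu(\phi_O^{-1}(B))=|B\cap O|\,\mu(\Stab(x_O))$ for finite $B$. Since $g$ acts by a graph automorphism, $d(x_O,sx_O)\le1$ forces $(gs)x_O=g(sx_O)$ into the closed neighbourhood of $gx_O$; hence $U_OS\subseteq\phi_O^{-1}(A\cup\partial A)$ for $U_O=\phi_O^{-1}(A)$, giving $\mu(U_OS\setminus U_O)/\mu(U_O)\le|\partial A\cap O|/|A\cap O|$. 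Now given an amenable $\Gamma$, I would choose $A$ with $|\partial A|/|A|$ arbitrarily small; since $\sum_O|\partial A\cap O|=|\partial A|$ and $\sum_O|A\cap O|=|A|$, a pigeonhole argument produces an orbit $O^\ast$ meeting $A$ with $|\partial A\cap O^\ast|/|A\cap O^\ast|$ equally small. The compact set $U=\phi_{O^\ast}^{-1}(A)$ then satisfies $\mu(US\setminus U)/\mu(U)\le|\partial A\cap O^\ast|/|A\cap O^\ast|$. Under the merely contingently $1$-Lipschitz hypothesis the basepoint $x_{O^\ast}$ carries its own generating set, and the same estimate yields $\Fol^\ast G=0$, completing the second bullet.

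\textbf{Main obstacle.} The remaining difficulty is the converse of the first bullet, where the fixed generating set $S$ yields a genuine right-Følner sequence and I must upgrade this to full geometric amenability (all compact $K$). Naive telescoping over $S^m$ is blocked, since without a bounded-degree hypothesis the iterated differences $US^m\setminus U$ need not be small; this is exactly the reason the single-orbit pigeonhole reduction is valuable, and the step I expect to require the most care. I would instead first observe that a right-Følner sequence for $S$ forces unimodularity: were $\Delta_G(s)>1$ for some $s\in S$, then $\mu(US\setminus U)\ge(\Delta_G(s)-1)\mu(U)$ for every $U$, contradicting the Følner condition. Given unimodularity $\mu$ is symmetric, so the sequence is simultaneously a left-Følner sequence for the compact generating set $S$, whence $G$ is amenable by the classical reduction of amenability to a single compact generating set; \cref{lem:amen-unimod} then upgrades amenability and unimodularity to geometric amenability. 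The two genuinely new ingredients are thus the pigeonhole reduction to one orbit and this unimodularity observation.
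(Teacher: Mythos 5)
Your proposal is correct, and for three of the four implications it follows essentially the paper's own route. The converse of the first bullet is the paper's \cref{prop:LipActionOnAmenable}: your orbit-map identity $\mu(\phi_O^{-1}(B))=|B\cap O|\,\mu(\Stab(x_O))$ is the content of \cref{lem:S.gen}/\cref{lem:quantitativeVT}; your pigeonhole over orbits is the same pigeonhole (working with $\partial A\cap O$ directly lets you skip the paper's step of deleting inter-orbit edges, a small simplification); and your ``unimodularity observation'' followed by converting the right F\o lner sequence into a left one via symmetry of $\mu$ and invoking the single-generating-set criterion (\cref{lem:amen.gen.set}) plus \cref{lem:amen-unimod} is exactly the content of the paper's \cref{lem:geom.amen.gen.set}. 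The second bullet is handled as in the paper: \cref{prop:almost.amen.trans.seq} plus a disjoint union (justified by \cref{lem:almostLipImpliesCoarseConnected}) in one direction, and pigeonhole plus \cref{prop:almost.amen.trans.seq} in the other. The one place you genuinely diverge is the forward direction of the first bullet: the paper cites \cref{thm:amen+unimod} (hence routes through the Soardi--Woess--Salvatori theorem, \cref{thm:sw}, and Abels's theorem) and then adds edges to convert an $r$-Lipschitz action into a $1$-Lipschitz one, whereas you build a Cayley--Abels graph on $G/K$ with $S=KSK$ and prove $\h(\Gamma)=\inf_U\mu(US\setminus U)/\mu(U)$ by a saturation argument. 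This is more self-contained: it is in effect an inlining of \cref{lem:S.gen} together with the second half of the proof of \cref{prop:trans}, and it buys you a proof of this direction that never passes through \cref{thm:sw}.

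One small repair is needed in the forward direction of the second bullet, where you say the graphs $\Gamma_n$ supplied by \cref{prop:almost.amen.trans.seq} may be assumed connected ``by passing to a component if necessary''. That step is not available: if $\Gamma_n$ were disconnected, $G$ would not act on a single component---only the (possibly proper) open subgroup stabilising that component would---so passing to a component loses the $G$-action. Connectivity is genuinely required here, since by \cref{lem:almostLipImpliesCoarseConnected} a contingently $1$-Lipschitz action forces every orbit to induce a connected subgraph. The fix is immediate, however: the graphs produced in the proof of \cref{prop:Folstar.versus.h} are Cayley--Abels graphs, hence connected, so with that observation in place of the parenthetical your argument goes through unchanged.
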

We actually prove a slightly more detailed result than \cref{thm:geo.amen<->action.simple}, which we state below as \cref{thm:geo.amen<->action}.

\begin{remark}
\cref{lem:amen-unimod} implies that geometric amenabillity is stronger than amenability. However, it is not clear whether $\Fol^\ast G=0$ is a stronger property than $\Fol G=0$. Clearly the two conditions coincide when $G$ is unimodular. If $G$ is non-unimodular and amenable then $\Fol G=0$. On the other hand, it is easy to see that $\Fol^\ast G$ is not $0$ if the modular homomorphism has discrete image; this is the case, for example, in the affine group over $\Q_p$, where the image of the modular homomorphism is the powers of $p$. Note that \cref{thm:deltaDense.simple} shows that if $G$ is amenable, then the converse holds as well. We do not know what happens if $G$ is neither unimodular nor amenable.
\end{remark}

\begin{que}\label{qu:dense}
If $\Fol G=0$ and $\Delta_G$ has dense image in $\R^*_+$, must it be the case that $\Fol^\ast G=0$?
\end{que}

\subsection*{The space of $G$-transitive graphs} Let $ \mathfrak{G}$ be the set of  isomorphism classes of locally finite vertex-transitive graphs. Given a compactly generated totally disconnected locally compact group $G$, we define $\mathfrak{G}(G)$ to be the subset of $\mathfrak{G}$ consisting of graphs admitting a proper transitive action of $G$.
We then define $\h_G=\inf_{\Gamma\in \mathfrak{G}(G)}\h(\Gamma)$, where $\h(\Gamma)$ is the Cheeger constant of $\Gamma$ as above. This allows us to formulate the following refinement of  \cref{prop:almost.amen.trans.seq}.
 \begin{theorem}\label{prop:Folstar.versus.h}
Suppose $G$ is a compactly generated totally disconnected locally compact group. Then $\h_G=\Fol^\ast G$.
\end{theorem}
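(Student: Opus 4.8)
The plan is to pass through the Cayley--Abels description of the graphs in $\mathfrak{G}(G)$ and to match the Cheeger ratios of such a graph \emph{exactly} with the F\o lner-type ratios defining $\Fol^\ast G$. First I would recall the standard dictionary (citing the references already in the paper): by van Dantzig's theorem $G$ has a compact open subgroup, and every $\Gamma\in\mathfrak{G}(G)$ arises, up to isomorphism, by choosing a base vertex whose stabiliser is a compact open subgroup $K$, so that the vertex set is $G/K$ and there is a symmetric compact generating set $S$ with $K\subseteq S=KSK$ and $\langle S\rangle=G$ for which $gK\sim hK$ precisely when $g^{-1}h\in S\setminus K$.

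The crucial computation is an exact identity. Writing $\pi\colon G\to G/K$ for the quotient map, I would check that for every nonempty finite $A\subseteq G/K$ the set $U=\pi^{-1}(A)$ is compact and right-$K$-invariant, with $\pi(US)=A\cup\partial A$ and hence $US\setminus U=\pi^{-1}(\partial A)$. Since left Haar measure assigns each coset $gK$ the common value $\mu(K)$, this gives
\[
\frac{\mu(US\setminus U)}{\mu(U)}=\frac{|\partial A|\,\mu(K)}{|A|\,\mu(K)}=\frac{|\partial A|}{|A|}.
\]
As the compact right-$K$-invariant sets of positive measure are exactly the $\pi^{-1}(A)$ with $A$ finite and nonempty, this identity yields $\h(\Gamma)=\inf_U\mu(US\setminus U)/\mu(U)$, the infimum being over such $U$. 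In particular $\Fol^\ast G\le\h(\Gamma)$, and taking the infimum over $\Gamma\in\mathfrak{G}(G)$ proves $\Fol^\ast G\le\h_G$.

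For the reverse inequality I must begin from an arbitrary competitor in $\Fol^\ast G$, namely a compact symmetric generating set $S_0$ and a compact $U_0$ with $\mu(U_0)>0$, and manufacture from it a graph in $\mathfrak{G}(G)$ whose Cheeger constant is at most $\mu(U_0S_0\setminus U_0)/\mu(U_0)+\eps$. Given a compact open subgroup $K$ I set $U=U_0K$ and $S=KS_0K\cup K$; then $S$ is a symmetric compact generating set with $K\subseteq S=KSK$, so it defines a graph $\Gamma(G,K,S)\in\mathfrak{G}(G)$, while $U$ is compact, right-$K$-invariant and of positive measure, hence equals $\pi^{-1}(A)$ for some finite nonempty $A$. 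By the identity above, $\mu(US\setminus U)/\mu(U)=|\partial A|/|A|\ge\h(\Gamma(G,K,S))\ge\h_G$, so it suffices to prove that, as $K$ ranges over a neighbourhood basis of the identity consisting of compact open subgroups (directed by reverse inclusion),
\[
\frac{\mu(US\setminus U)}{\mu(U)}\longrightarrow\frac{\mu(U_0S_0\setminus U_0)}{\mu(U_0)}.
\]

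This convergence is the heart of the matter and the main obstacle: I have enlarged both $S_0$ and $U_0$, and must verify that the F\o lner ratio does not inflate in the limit. Here I would invoke continuity of Haar measure from above, which is valid for the decreasing nets of compact sets $U_0K$ and $US=U_0KS_0K\cup U_0K$ by outer regularity and a finite-intersection-property argument. A compactness (subnet) argument, using that $S_0$ and $U_0$ are closed and that the extra factors of $K$ shrink to the identity, identifies the intersections as $\bigcap_K U_0K=U_0$ and $\bigcap_K\bigl(U_0KS_0K\cup U_0K\bigr)=U_0S_0\cup U_0$. Since $K\subseteq S$ forces $U\subseteq US$, we obtain $\mu(US\setminus U)=\mu(US)-\mu(U)\to\mu(U_0S_0\cup U_0)-\mu(U_0)=\mu(U_0S_0\setminus U_0)$ together with $\mu(U)\to\mu(U_0)$, which gives the displayed limit. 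Choosing $K$ small enough then yields $\h_G\le\mu(U_0S_0\setminus U_0)/\mu(U_0)+\eps$; taking the infimum over $S_0$ and $U_0$ and letting $\eps\to0$ gives $\h_G\le\Fol^\ast G$, completing the proof of equality.
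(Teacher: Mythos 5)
Your proposal is correct and follows essentially the same route as the paper's proof: the inequality $\Fol^\ast G\le\h_G$ comes from the exact identity between Cheeger ratios of Cayley--Abels graphs and F\o lner ratios of bi-$K$-invariant compact sets, and the reverse inequality comes from thickening an arbitrary competitor $(S_0,U_0)$ to $(KS_0K\cup K,\,U_0K)$ for a small compact open subgroup $K$ and controlling the loss via regularity of the Haar measure. The only real difference is in execution: where you prove exact convergence of the F\o lner ratios along the net of compact open subgroups (continuity from above plus a subnet argument), the paper settles for the one-sided inclusion $FKSK\subseteq FSW$ for a suitable open neighbourhood $W$ of the identity, which is all that is needed.
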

We recall that $\mathfrak{G}$ comes with a natural topology, obtained from the following distance:
we say that two graphs $\Gamma,\Gamma'\in \mathfrak{G}$ are at distance at most $2^{-n}$ if  their balls of radius $n$ are isomorphic. We observe that two graphs with different degrees are at distance $1$ apart. Moreover a standard compactness argument shows that two graphs at distance $0$ must be isomorphic, so that this indeed defines a distance on  $\mathfrak{G}$. Write $\overline{\mathfrak{G}(G)}$ for the closure of  $\mathfrak{G}(G)$ in $\mathfrak{G}$ for this topology.
\begin{theorem}\label{thm:h.semi-continuous}
The map $\Gamma\mapsto\h_\Gamma$ is upper semicontinuous on $\mathfrak{G}$. In particular, if $G$ is a compactly generated totally disconnected locally compact group such that $\overline{\mathfrak{G}(G)}$ contains an amenable graph, then $\h_G=0$.
\end{theorem}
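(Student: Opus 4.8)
The plan is to prove the upper semicontinuity directly from the definition and then read off the ``in particular'' clause as a short consequence. The whole argument rests on the observation that $\h(\Gamma)$ is an infimum of the \emph{local} quantities $|\partial A|/|A|$ over finite vertex sets $A$, and that each such quantity is determined by a bounded-radius neighbourhood of $A$. Fix $\Gamma_0\in\mathfrak{G}$ and $\eps>0$; I want to produce a neighbourhood of $\Gamma_0$ on which $\h<\h(\Gamma_0)+\eps$. First I would use the definition of $\h(\Gamma_0)$ as an infimum to choose a finite set $A\subseteq\Gamma_0$ with $|\partial A|/|A|<\h(\Gamma_0)+\eps$. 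If $A$ splits as a disjoint union $\bigsqcup_i A_i$ across distinct connected components then $\partial A=\bigsqcup_i\partial A_i$, so $|\partial A|/|A|\ge\min_i|\partial A_i|/|A_i|$ and I may replace $A$ by a single piece; hence I may assume $A$ lies in one component, and therefore (that component being connected and locally finite) inside a ball $B(o,r)$ for some root $o$ and radius $r$, with $\partial A\subseteq B(o,r+1)$.

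The key step is to transport $A$ to nearby graphs. If $\Gamma\in\mathfrak{G}$ satisfies $d(\Gamma,\Gamma_0)\le 2^{-(r+1)}$ then, by the definition of the metric, the rooted balls of radius $r+1$ in $\Gamma_0$ and $\Gamma$ are isomorphic, say via $\phi\colon B_{\Gamma_0}(o,r+1)\to B_{\Gamma}(o',r+1)$. Setting $A'=\phi(A)$, I would check that $\phi$ restricts to a bijection $\partial A\to\partial A'$. Indeed, since $A\subseteq B(o,r)$, every neighbour of $A$ lies in $B(o,r+1)$, and likewise every neighbour of $A'$ lies in $B(o',r+1)$; thus all the adjacencies defining both boundaries are entirely visible to the graph isomorphism $\phi$ (and to $\phi^{-1}$), giving $\phi(\partial A)=\partial A'$. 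Consequently $|A'|=|A|$ and $|\partial A'|=|\partial A|$, so $\h(\Gamma)\le|\partial A'|/|A'|=|\partial A|/|A|<\h(\Gamma_0)+\eps$. As this holds throughout the $2^{-(r+1)}$-ball around $\Gamma_0$, upper semicontinuity follows.

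For the final assertion, suppose $\Gamma_0\in\overline{\mathfrak{G}(G)}$ is amenable, so $\h(\Gamma_0)=0$. Given $\eps>0$, upper semicontinuity furnishes a neighbourhood $N$ of $\Gamma_0$ with $\h<\eps$ throughout $N$; since $\Gamma_0$ lies in the closure of $\mathfrak{G}(G)$, there is some $\Gamma\in\mathfrak{G}(G)\cap N$, whence $\h_G\le\h(\Gamma)<\eps$. Letting $\eps\to 0$ and using $\h_G\ge 0$ yields $\h_G=0$.

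I expect the only genuine obstacle to be the verification that $\phi$ carries $\partial A$ bijectively onto $\partial A'$: the point is to pin down precisely which radius of ball must be controlled so that the set $A$, its boundary $\partial A$, and all the adjacencies between them are simultaneously seen by the local isomorphism, and radius $r+1$ is exactly what is needed. The reduction to a single connected component is a minor technical wrinkle, and the deduction of the corollary is purely formal.
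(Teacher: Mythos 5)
Your argument is correct, and it reaches the conclusion by a more direct route than the paper's. The paper factors the proof through the isoperimetric profile $j_\Gamma(n)=\inf_{|A|\le n}|\partial A|/|A|$ and shows (\cref{prop:isopcont}) that each map $\Gamma\mapsto j_\Gamma(n)$ is continuous --- in fact locally constant --- on $\mathfrak{G}$, whence $\h_\Gamma=\inf_n j_\Gamma(n)$ is upper semicontinuous as an infimum of continuous functions. The crucial point there is a \emph{uniform} radius bound: by splitting a set into pieces at pairwise distance at least $3$ (the same mediant inequality you use, but for a finer decomposition than yours into connected components), one may assume an optimal set for $j_\Gamma(n)$ is ``$2$-connected'', hence has diameter $O(n)$, so that $j_\Gamma(n)$ is computed entirely inside a ball whose radius depends only on $n$ and not on $\Gamma$; this uniformity is what allows sets to be transported in \emph{both} directions between nearby graphs and gives genuine continuity, not just semicontinuity. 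Your proof skips the profile and the uniformity: you transport a single near-optimal F\o lner set of $\Gamma_0$, lying in a ball whose radius depends on that set, into every sufficiently close graph. That one-sided transport is exactly what upper semicontinuity asks for, so your argument is shorter and more elementary; what the paper's route buys is the stronger intermediate statement that the finite-scale isoperimetric profile is locally constant on $\mathfrak{G}$, which is of independent interest. Two points of care in your write-up, both of which hold: the metric on $\mathfrak{G}$ must indeed be read as comparing \emph{rooted} balls (as you do --- an unrooted isomorphism of balls need not respect the centre, and the transport can fail for it), and your step ``likewise every neighbour of $A'$ lies in $B(o',r+1)$'' uses the fact that a rooted isomorphism carries $B(o,r)$ onto $B(o',r)$; this is true because distance to the root is intrinsic to the rooted ball (geodesics from the root stay inside it), and deserves an explicit sentence. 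With that, your verification that $\phi(\partial A)=\partial A'$ is complete, and your deduction of the ``in particular'' clause is, as you say, purely formal and correct.
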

We prove \cref{prop:Folstar.versus.h,thm:h.semi-continuous} in \cref{sec:G-trans}.

\medskip

For every $k\in \N$, let $\mathfrak{G}_k$ be the set of  isomorphism classes of vertex-transitive graphs of degree at most $k$, and let 
$\mathfrak{G}_k(G)=\mathfrak{G}(G)\cap \mathfrak{G}_k$. It is natural to consider the quantity $\h_{G,k}=\inf_{\Gamma\in \mathfrak{G}_k(G)}\h(\Gamma)$.

\begin{que}
Can we have $\h_{G,k}>0$ for all $k$ but $\h_G=0$?
\end{que}
We strongly expect the answer to be positive although we do not currently have an example.
\begin{que}
Does $\h_{G,k}=0$ for some $k$ imply that $\overline{\mathfrak{G}(G)}$ contains an amenable graph?
\end{que}

\section{The Soardi--Woess--Salvatori theorem}\label{sec:sw}
In this section we prove \cref{thm:sw}.
Our proof consists of combining \cref{lem:amen-unimod} with the following two results, the second of which is similar to a reduction appearing in \cite{salvatori} and \cite[Lemma 3.10]{blps}.
\begin{prop}\label{prop:trans}
Suppose $\Gamma$ is a connected, locally finite vertex-transitive graph, and $G$ is a locally compact group admitting a proper transitive action on $\Gamma$. Then $\Gamma$ is amenable if and only if $G$ is geometrically amenable.
\end{prop}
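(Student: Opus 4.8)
The plan is to set up a dictionary between finite vertex sets of $\Gamma$ and certain compact subsets of $G$, under which the graph boundary becomes a right-translation boundary. Fix a base vertex $o$ and let $K=\Stab_G(o)$. Properness forces $K$ to be compact, while continuity of the induced homomorphism $G\to\Aut(\Gamma)$, together with the fact that vertex stabilisers in $\Aut(\Gamma)$ are open, forces $K$ to be open; thus $0<\mu(K)<\infty$. Transitivity identifies the vertex set with $G/K$ via $gK\mapsto g\cdot o$; write $\pi\colon G\to G/K$ for the quotient map. Under this identification the finite vertex sets $A$ correspond bijectively to the compact, right-$K$-invariant sets $U=\pi^{-1}(A)$, with $\mu(U)=|A|\,\mu(K)$. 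Let $N(o)$ be the set of neighbours of $o$ and put $T=\pi^{-1}(N(o))$; since $K$ permutes $N(o)$ and adjacency is symmetric, $T$ is a symmetric, bi-$K$-invariant, compact set, and the neighbours of a vertex $gK$ are exactly $\pi(gT)$. Consequently $\pi^{-1}(N(A))=UT$, and because $UT$ is again right-$K$-invariant one obtains the key identity
\[
\frac{|\partial A|}{|A|}=\frac{\mu(UT\setminus U)}{\mu(U)},
\]
valid for every finite nonempty $A$ with $U=\pi^{-1}(A)$.

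For the implication ``geometrically amenable $\implies$ amenable'' I would feed the compact set $T$ into the definition of geometric amenability: given $\eps>0$ there is a compact $U$ of positive measure with $\mu(UT\setminus U)\le\eps\,\mu(U)$. Such a $U$ need not be right-$K$-invariant, so I would replace it by its saturation $U'=UK$, which is compact, right-$K$-invariant, and of positive measure. Since $KT=T$ we have $U'T=UT$, and since $U\subseteq U'$ this gives $U'T\setminus U'\subseteq UT\setminus U$, whence $\mu(U'T\setminus U')\le\eps\,\mu(U')$. Applying the key identity to $A=\pi(U')$ yields $|\partial A|/|A|\le\eps$, so $\h(\Gamma)\le\eps$ for every $\eps$ and $\Gamma$ is amenable.

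The reverse implication is where the real work lies, and I expect it to be the main obstacle: amenability of $\Gamma$ only supplies good sets for the single compact set $T$, whereas geometric amenability demands right-F\o lner sets for \emph{every} compact $K'\subseteq G$. (This is a genuine gap, not a formality: F\o lner sets for a fixed generating set are what is measured by $\Fol^\ast G$, which may vanish without $G$ being geometrically amenable.) The bridge is that $S=K\cup T$ is a compact symmetric generating set containing the identity: it is symmetric and compact by the properties of $K$ and $T$, and connectedness of $\Gamma$ together with transitivity gives $\langle S\rangle=G$. An induction based on $\pi^{-1}(B(A,1))=US$ then shows $\pi^{-1}(B(A,n))=US^n$, so the $n$-step isoperimetric ratio satisfies $\mu(US^n\setminus U)/\mu(U)=|\partial_nA|/|A|$, where $\partial_nA=B(A,n)\setminus A$. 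Since every compact $K'$ lies in some $S^n$ (the open sets $S^n$ increase to $G$), it therefore suffices to prove that amenability of $\Gamma$ forces $\inf_A|\partial_nA|/|A|=0$ for every fixed $n$.

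To close this I would prove the elementary graph fact that, in the $d$-regular graph $\Gamma$, $\h(\Gamma)=0$ propagates to every iterated boundary. Writing $\partial B(A,j)$ for the outer boundary of the $j$-ball around $A$, each vertex at distance $j+1$ from $A$ is adjacent to one at distance $j$, so $\partial B(A,j)\subseteq N(\partial B(A,j-1))$ and hence $|\partial B(A,j)|\le d\,|\partial B(A,j-1)|$. Iterating from $|\partial A|\le\delta|A|$ gives $|\partial B(A,j)|\le d^{\,j}\delta|A|$, and therefore $|\partial_nA|=\sum_{j=0}^{n-1}|\partial B(A,j)|\le\delta|A|\sum_{j=0}^{n-1}d^{\,j}$. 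Choosing a set $A$ with $\delta=|\partial A|/|A|$ small enough (possible since $\h(\Gamma)=0$) makes $|\partial_nA|\le\eps|A|$. Translating back through the key identity produces, for each $n$ and each $\eps$, a right-$K$-invariant compact $U$ with $\mu(US^n\setminus U)\le\eps\,\mu(U)$, and hence $\mu(UK'\setminus U)\le\eps\,\mu(U)$ for every compact $K'\subseteq S^n$. This is precisely geometric amenability, completing the proof.
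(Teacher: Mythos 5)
Your proposal is correct and takes essentially the same route as the paper's proof: your key identity is \cref{lem:S.gen} in disguise (the dictionary $A\leftrightarrow\pi^{-1}(A)=G_{o\to A}$ with $\mu(G_{o\to A})=|A|\,\mu(G_o)$), your saturation $U'=UK$ is the paper's replacement of $U_n$ by $U_nG_o$, and your handling of an arbitrary compact set via $K'\subseteq S^n$ together with the bounded-degree propagation $|\partial_n A|\le\delta|A|\sum_{j<n}d^j$ is exactly the paper's forward direction (which the paper asserts rather than proves in detail). One minor repair: applying the definition of geometric amenability to $T$ itself only yields $\mu(UT)\le(1+\eps)\mu(U)$, which does not imply $\mu(UT\setminus U)\le\eps\,\mu(U)$ because $1\notin T$ and so $U\not\subseteq UT$ in general; apply it instead to $S=K\cup T$ (or to $T\cup\{1\}$), whereupon $U\subseteq US$ gives $\mu(US\setminus U)\le\eps\,\mu(U)$ and hence $\mu(UT\setminus U)\le\eps\,\mu(U)$ since $UT\subseteq US$.
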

\begin{lemma}\label{prop:reduc}
Suppose $\Gamma$ is a connected, locally finite quasitransitive graph, and $G$ is a locally compact group admitting a proper quasitransitive action on $\Gamma$. Then there exists a connected, locally finite vertex-transitive graph $\Gamma'$ quasi-isometric to $\Gamma$, and a compact normal subgroup $H\lhd G$ such that $G/H$ acts properly transitively on $\Gamma'$.
\end{lemma}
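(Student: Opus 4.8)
The plan is to pass to a single orbit to manufacture a vertex-transitive graph quasi-isometric to $\Gamma$, and then to quotient by the (compact) kernel of the resulting action. First I would fix a base vertex $x_0\in\Gamma$ and consider its orbit $O=G\cdot x_0$. Since $G$ acts by automorphisms, hence by isometries of the graph metric $d_\Gamma$, and since $O$ is $G$-invariant, every vertex of a given orbit lies at the same distance from $O$: indeed $d_\Gamma(g y,O)=d_\Gamma(g y,gO)=d_\Gamma(y,O)$ for all $g\in G$. As the action is quasitransitive there are only finitely many orbits, so taking the maximum of these finitely many distances yields a constant $R$ with the property that $O$ is $R$-dense in $\Gamma$. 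I would then define $\Gamma'$ to be the graph with vertex set $O$ in which two distinct points $u,v\in O$ are declared adjacent precisely when $d_\Gamma(u,v)\le 2R+1$.

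Next I would check that $\Gamma'$ has the required properties. It is locally finite because balls of radius $2R+1$ in the locally finite graph $\Gamma$ are finite. To see that the inclusion $O\hookrightarrow\Gamma$ is a quasi-isometry I would establish, for $u,v\in O$, the two-sided comparison
\[
d_{\Gamma'}(u,v)\le d_\Gamma(u,v)\le(2R+1)\,d_{\Gamma'}(u,v).
\]
The right-hand inequality follows by concatenating $\Gamma$-paths of length at most $2R+1$ along a $\Gamma'$-geodesic. For the left-hand inequality I would take a $\Gamma$-geodesic from $u$ to $v$ and replace each of its vertices by a point of $O$ within distance $R$ (keeping the endpoints $u,v$ themselves); consecutive replacements then lie at $d_\Gamma$-distance at most $R+1+R=2R+1$ and so are $\Gamma'$-adjacent, producing a $\Gamma'$-path no longer than the original geodesic. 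Together with $R$-density this shows $\Gamma'$ is connected and quasi-isometric to $\Gamma$. Since $G$ preserves $d_\Gamma$ and permutes $O$, it acts on $\Gamma'$ by automorphisms, transitively because $O$ is a single orbit; in particular $\Gamma'$ is vertex-transitive. The action stays proper, as the stabiliser of $u\in O$ for the $\Gamma'$-action is exactly its (compact) stabiliser for the $\Gamma$-action.

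Finally I would let $H$ be the kernel of the induced homomorphism $G\to\Aut(\Gamma')$. This $H$ is closed and normal, and it is contained in the compact stabiliser $G_{x_0}$, hence compact. By construction $G/H$ acts faithfully on $\Gamma'$, still transitively, and its point stabilisers are the compact groups $G_u/H$, so the action is proper; since $G/H$ is again locally compact this is exactly the desired conclusion. I expect the only genuinely delicate point to be the quasi-isometry verification in the second step, and specifically checking that the single adjacency threshold $2R+1$ simultaneously forces connectedness of $\Gamma'$ and both sides of the distance comparison. The reduction to a single orbit and the passage to the quotient $G/H$ are then routine, resting only on the fact that vertex stabilisers are compact (and open, by continuity of the action together with local finiteness of $\Gamma$), which keeps the kernel and the quotient within the compact world.
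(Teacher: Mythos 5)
Your proposal is correct and takes essentially the same approach as the paper: restrict the action to a single orbit, declare two orbit vertices adjacent when their $d_\Gamma$-distance is at most a fixed threshold (you use $2R+1$ with $R$ the density radius of the orbit, the paper uses $2n$ with $n$ the number of orbits), and take $H$ to be the kernel of the induced homomorphism $G\to\Aut(\Gamma')$, which is compact as a closed subset of a vertex stabiliser. The only difference is cosmetic: the paper cites this one-orbit construction as well known, whereas you verify the connectedness and quasi-isometry claims directly.
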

Given these results, it is straightforward to deduce \cref{thm:sw}, as follows.
\begin{proof}[Proof of \cref{thm:sw}]
Let $\Gamma'$ be the graph and $H\lhd G$ be the compact normal subgroup given by \cref{prop:reduc}. Since $\Gamma$ and $\Gamma'$ are quasi-isometric, either both are amenable or neither is \cite[Theorem 18.13]{drutu-kapovich}. Moreover, since $H$ is compact, $G$ is amenable if and only if $G/H$ is amenable, and unimodular if and only if $G/H$ is unimodular, and hence, by \cref{lem:amen-unimod}, geometrically amenable if and only if $G'$ is geometrically amenable. The theorem therefore follows from applying \cref{prop:trans} to $\Gamma'$ and $G/H$.
\end{proof}

All that remains, then, is to prove \cref{prop:trans,prop:reduc}. We start with the following result, which is basically the key reason why geometric amenability of a group relates to amenability of a graph it acts on transitively. Here, and throughout this paper, given a group $G$ acting on a graph $\Gamma$, and a vertex $o\in\Gamma$ and a subset $X\subseteq\Gamma$ of vertices, we write $G_o$ for the stabiliser of $o$ in $G$, and
\[
G_{o\to X}=\{g\in G:g\cdot o\in X\}.
\]
Moreover, given a subset $X$ of a graph $\Gamma$ and a natural number $r$, we write $[X]_r=\{y\in\Gamma:d(y,X)\le r\}$ for the $r$-neighbourhood of $X$, and $\partial_rX=[X]_r\setminus X$ for the $r$-exterior boundary of $X$.
\begin{prop}\label{lem:S.gen}
Suppose $\Gamma$ is a connected, locally finite vertex-transitive graph, and $G$ is a locally compact group admitting a proper transitive action on $\Gamma$. Let $o\in\Gamma$, and let $S=\{g\in G:d(g\cdot o,o)\le1\}$. Then $S$ is a symmetric compact open generating set for $G$, 
and for every subset $X\subseteq\Gamma$ the set $G_{o\to X}$ is compact and open and satisfies
\begin{equation}\label{eq:pullback.mu}
\mu(G_{o\to X})=|X|\cdot\mu(G_o),
\end{equation}
and more generally
\begin{equation}\label{eq:mu.boundary}
\mu(G_{o\to X}S^r)=|[X]_r|\cdot\mu(G_o)
\end{equation}
for every $r\in\N$.
\end{prop}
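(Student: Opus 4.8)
The plan is to reduce the whole statement to the coset structure of the orbit map $g\mapsto g\cdot o$ together with the fact that each group element acts as a graph isometry. I would begin by recording the properties of the stabiliser $G_o$: since the stabiliser of $o$ in $\Aut(\Gamma)$ is compact and open and $G$ acts continuously, $G_o$ is the preimage of an open set under a continuous homomorphism, hence open, while properness gives that it is compact; in particular $0<\mu(G_o)<\infty$ by Haar properties (i) and (ii). The properties of $S$ then follow quickly. Symmetry is immediate from the isometry property, since $d(g^{-1}\cdot o,o)=d(o,g\cdot o)=d(g\cdot o,o)$. Since $S=G_{o\to[o]_1}$ and, by transitivity, $G_{o\to x}$ is a left coset $g_xG_o$ whenever $g_x\cdot o=x$, and since $[o]_1$ is finite by local finiteness, $S$ is a finite union of translates of the compact open set $G_o$ and is therefore itself compact and open. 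For generation, set $H=\langle S\rangle$ and note $G_o\subseteq S\subseteq H$; using transitivity and the isometry property one checks that $H\cdot o$ is closed under passing to neighbours, so that connectedness forces $H\cdot o=\Gamma$, whence every $g\in G$ satisfies $h^{-1}g\in G_o\subseteq H$ for a suitable $h\in H$, and $H=G$.

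Equation \eqref{eq:pullback.mu} then drops out of the coset decomposition $G_{o\to X}=\bigsqcup_{x\in X}G_{o\to x}=\bigsqcup_{x\in X}g_xG_o$: the pieces are disjoint because $g\cdot o$ is a single vertex, left-invariance gives $\mu(g_xG_o)=\mu(G_o)$, and (countable) additivity yields $\mu(G_{o\to X})=|X|\cdot\mu(G_o)$. Openness holds for every $X$, and compactness for finite $X$ (for infinite $X$ both sides are $+\infty$).

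The crux of the proposition is the set identity
\[
G_{o\to X}S^r=G_{o\to[X]_r},
\]
after which \eqref{eq:mu.boundary} follows by applying \eqref{eq:pullback.mu} to $[X]_r$. By induction, using $S^{r+1}=S^rS$ and $[[X]_r]_1=[X]_{r+1}$, it suffices to prove the case $r=1$. The inclusion $G_{o\to X}S\subseteq G_{o\to[X]_1}$ is the easy direction: if $g\cdot o\in X$ and $s\in S$, then the isometry property gives $d(gs\cdot o,g\cdot o)=d(s\cdot o,o)\le1$, so $gs\cdot o\in[X]_1$. The reverse inclusion is the step I expect to be the main obstacle, and it is where transitivity is indispensable: given $h$ with $h\cdot o\in[X]_1$, I would pick $x\in X$ with $d(h\cdot o,x)\le1$ and, by transitivity, some $g$ with $g\cdot o=x$, so that $g\in G_{o\to X}$; setting $s=g^{-1}h$ and invoking the isometry property once more gives $d(s\cdot o,o)=d(h\cdot o,g\cdot o)=d(h\cdot o,x)\le1$, so $s\in S$ and $h=gs\in G_{o\to X}S$, as required.
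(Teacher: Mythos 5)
Your proof is correct, and it rests on the same two pillars as the paper's: the coset decomposition $G_{o\to X}=\bigsqcup_{x\in X}g_xG_o$ (which gives openness, compactness and \eqref{eq:pullback.mu}), and the use of the isometry property to convert graph distances into membership in powers of $S$. The differences are organisational rather than substantive. The paper proves by induction on $n$ the single statement that $S^n=\{g\in G:d(g\cdot o,o)\le n\}$; this simultaneously yields generation and, via a short chain of equivalences, the identity $G_{o\to X}S^r=G_{o\to[X]_r}$ from which \eqref{eq:mu.boundary} follows. You split that content in two: generation comes from the qualitative observation that $\langle S\rangle\cdot o$ is closed under passing to neighbours (plus connectedness and transitivity), while the identity $G_{o\to X}S^r=G_{o\to[X]_r}$ is proved by induction on $r$ using $[[X]_r]_1=[X]_{r+1}$, with a direct argument at $r=1$. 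Note that your identity specialised to $X=\{o\}$ recovers the paper's characterisation of $S^r$ (since $G_oS^r=S^r$), so the two inductions carry exactly the same information. A small bonus of your write-up: you correctly observe that compactness of $G_{o\to X}$ should only be asserted for finite $X$ (for infinite $X$ the set is an infinite disjoint union of open cosets, hence open but not compact, and both sides of \eqref{eq:pullback.mu} are infinite); the paper's statement glosses over this, though its proof likewise restricts the compactness claim to finite $X$.
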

\begin{proof}
The first part is essentially \cite[Lemma 3]{woess}. To see that $S$ is symmetric, note that
\[
d(g^{-1}\cdot o,o)=d(g^{-1}\cdot o,g^{-1}g\cdot o)=d(g\cdot o,o).
\]
We will prove by induction on $n$ that $d(g\cdot o,o)\le n$ for a given $n\ge1$ if and only if $g\in S^n$, which implies in particular that $S$ generates $G$. The base case $n=1$ is true by definition, whilst for $n\ge2$ we have
\begin{align*}
d(g\cdot o,o)\le n&\iff d(g\cdot o,x)\le 1\text{ for some $x$ with $d(x,o)\le n-1$}\\
   &\iff d(g\cdot o,h\cdot o)\le 1\text{ for some }h\in S^{n-1}&&\text{(by induction)}\\
   &\iff h^{-1}g\in S\text{ for some }h\in S^{n-1}&&\text{(by the $n=1$ case)}\\
   &\iff g\in S^n,
\end{align*}
as claimed.

By transitivity of the action, we may pick, for each $x\in\Gamma$, an automorphism $g_x\in G$ such that $g_x\cdot o=x$. Note then that
\[
G_{o\to X}=\bigcup_{x\in X}g_xG_o
\]
for an arbitrary subset $X\subseteq\Gamma$, which immediately implies \eqref{eq:pullback.mu}. Furthermore, $G_o$ is compact by properness and open by continuity, so this also means that $G_{o\to X}$ is compact and open whenever $X$ is finite, and in particular that $S$ is compact and open, as required.

Finally, for every $g\in G$ we have
\begin{align*}
g\in G_{o\to X}S^r&\iff\text{there exists $q\in G_{o\to X}$ such that $d(q^{-1}g\cdot o,o)\le r$}\\
    &\iff\text{there exists $q\in G_{o\to X}$ such that $d(g\cdot o,q\cdot o)\le r$}\\
    &\iff g\cdot o\in[X]_r\\
    &\iff g\in G_{o\to[X]_r},
\end{align*}
and so \eqref{eq:mu.boundary} follows from \eqref{eq:pullback.mu}.
\end{proof}

\begin{proof}[Proof of \cref{prop:trans}]
First, suppose that $\Gamma$ is amenable, and let $(A_n)_{n=1}^\infty$ be a sequence of finite subsets of $\Gamma$ such that $|\partial A_n|/|A_n|\to0$. Since $\Gamma$ is transitive, and hence has uniformly bounded degrees, we in fact have that $|\partial_r A_n|/|A_n|\to0$ for all $r$. Let $K$ be a compact subset of $G$. \cref{lem:S.gen} says that $S$ is a symmetric open generating set for $G$, so we have $K\subseteq S^r$ for some $r$. This in turn implies that $G_{o\to A_n}K\subseteq G_{o\to A_n}S^r$ for each $n$, and hence, by \cref{lem:S.gen}, that
\[
\frac{\mu(G_{o\to A_n}K)}{\mu(G_{o\to A_n})}\le\frac{\mu(G_{o\to A_n}S^r)}{\mu(G_{o\to A_n})}=\frac{|[A_n]_r|}{|A_n|}\to1.
\]

Conversely, if $G$ is geometrically amenable then since $S$ is compact there exists a sequence $(U_n)_{n=1}^\infty$ of compact subsets of positive measure in $G$ such that $\mu(U_nS)/\mu(S)\to1$.
Since $G_o\subseteq S$, this implies in particular that
$\mu(U_nS)/\mu(U_nG_o)\to1$.
Using the fact that $SG_o=S$ and $S=S^{-1}$, we have $G_oS=(SG_o)^{-1}=S$, and so we deduce further that $\mu(U_nG_oS)/\mu(U_nG_o)\to1$. Since $U_nG_o=G_{o\to U_n\cdot o}$, this combines with \cref{lem:S.gen} to show that $|\partial(U_n\cdot o)|/|U_n\cdot o|\to0$, and so $\Gamma$ is amenable.
\end{proof}

\begin{proof}[Proof of \cref{prop:reduc}]
It is well known that if $G$ has $n$ orbits, then if we fix one of these orbits $V$, and define $E=\{(x,y)\in V\times V:1\le d(x,y)\le2n\}$, the resulting graph $\Gamma'=(V,E)$ is connected, locally finite and quasi-isometric to $\Gamma$ (see e.g. the proof of \cite[Proposition 2.13]{hutchcroft-tointon}). We claim we may take $H$ to be the kernel of the homomorphism $G\to\Aut(\Gamma')$ given by the restriction to $V$ of the $G$-action on $\Gamma$. Indeed, the action of $G/H$ on $\Gamma'$ induced by this homomorphism is transitive by definition, whilst $H=\{g\in G:g\cdot v=v\text{ for all }v\in V\}$ is a closed subset of a vertex stabiliser, and hence compact.
\end{proof}

\section{Equivalent formulations of amenability and geometric amenability}

It is well known that in order to decide whether a locally compact group is amenable it suffices to consider the individual elements of a single compact generating set, as follows.
\begin{lemma}\label{lem:amen.gen.set}
Suppose $G$ is a locally compact group, and $S\subseteq G$ is a compact set generating $G$ as a semigroup. Then $G$ is amenable if and only if for each $\eps>0$ there exists a compact set $F\subseteq G$ of positive measure such that $\sup_{s\in S}\mu(sF\vartriangle F)/\mu(F)\le\eps$.
\end{lemma}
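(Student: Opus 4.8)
The plan is to treat the two directions separately, handling the easy implication directly from the definition and reducing the harder converse to the standard fact that a locally compact group is amenable (in the Følner sense used here) if and only if it admits a left-invariant mean on $L^\infty(G)$.

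For the forward implication, suppose $G$ is amenable and fix $\eps>0$. Applying the definition of amenability to the compact set $K=S\cup\{e\}$ with parameter $\eps/2$ produces a compact set $F$ of positive measure with $\mu(KF)/\mu(F)\le 1+\eps/2$. Since $e\in K$ we have $F\subseteq KF$, so $\mu(KF\setminus F)\le(\eps/2)\mu(F)$, and since $sF\subseteq KF$ for each $s\in S$ this gives $\mu(sF\setminus F)\le(\eps/2)\mu(F)$. Left-invariance of $\mu$ yields $\mu(sF\setminus F)=\mu(F\setminus sF)$, whence $\mu(sF\vartriangle F)=2\mu(sF\setminus F)\le\eps\mu(F)$ uniformly in $s\in S$, as required.

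For the converse, let $(F_n)$ be compact sets of positive measure with $\eta_n:=\sup_{s\in S}\mu(sF_n\vartriangle F_n)/\mu(F_n)\to0$, and set $\phi_n=\mathbf 1_{F_n}/\mu(F_n)\in L^1(G)$, a sequence of probability densities satisfying $\|L_s\phi_n-\phi_n\|_1\le\eta_n$ for every $s\in S$, where $L_g$ denotes left translation. The key step is to propagate this almost-invariance from $S$ to all of $G$: because $L_g$ is an isometry of $L^1(G)$, the triangle inequality gives $\|L_{s_1\cdots s_k}\phi_n-\phi_n\|_1\le\sum_{i=1}^k\|L_{s_i}\phi_n-\phi_n\|_1\le k\eta_n$. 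Since $S$ generates $G$ as a semigroup, every $g\in G$ is such a product, so $\|L_g\phi_n-\phi_n\|_1\to0$ for each fixed $g$. I would then take a weak-$\ast$ cluster point $m$ of $(\phi_n)$ in $(L^\infty(G))^\ast$, which exists by the Banach--Alaoglu theorem; a change of variables using left-invariance of $\mu$ shows that $\langle\phi_n,L_g\psi\rangle-\langle\phi_n,\psi\rangle=\langle L_{g^{-1}}\phi_n-\phi_n,\psi\rangle\to0$ for every $\psi\in L^\infty(G)$, so that $m$ is a left-invariant mean. The existence of such a mean is equivalent to amenability, completing the argument.

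The main obstacle worth flagging is that one cannot hope to prove the converse by telescoping the single-element bounds directly into a bound on $\mu(KF\vartriangle F)$ for a compact set $K$: the set $KF=\bigcup_{g\in K}gF$ is a union, and its measure is governed by a maximal-function-type quantity that is \emph{not} controlled by the individual symmetric-difference estimates $\mu(gF\vartriangle F)\le\eps\mu(F)$. Passing through an invariant mean (equivalently, through Reiter's condition in $L^1$) sidesteps this difficulty entirely, since almost-invariance is then only ever tested against one group element at a time; it is also precisely this feature that lets the argument use only semigroup generation, never requiring that some power $S^m$ absorb a given compact set. This is why I would not attempt a self-contained Følner-set argument and would instead invoke the standard equivalence between the invariant-mean and Følner formulations of amenability.
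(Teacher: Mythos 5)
Your proof is correct and takes essentially the same route as the paper's: the forward direction straight from the definition, and the converse by telescoping the generator estimates along a word $g=s_1\cdots s_m$ (which is where semigroup generation enters) and then passing to a left-invariant mean, finally invoking the standard Emerson--Greenleaf equivalence between the invariant-mean and compact-F\o lner formulations --- precisely the citations the paper uses. The only differences are cosmetic: you phrase the propagation step in $L^1$ (Reiter) language and extract the mean yourself via Banach--Alaoglu, where the paper instead cites \cite{BHV}, and your forward direction uses $\mu(sF\setminus F)=\mu(F\setminus sF)$ (equal finite measures) in place of the paper's $S\cup S^{-1}$ trick.
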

Although this is well known, we have not been able to locate a convenient self-contained reference, so we provide a proof.
\begin{proof}
If $G$ is amenable then by definition there exist compact sets $(F_n)_{n=1}^\infty$ of positive measure such that $\mu((S\cup S^{-1})F_n\setminus F_n)/\mu(F_n)\to0$. In particular, for each $s\in S$ we have $\mu(sF_n\setminus F_n)/\mu(F_n)\to0$ and $\mu(F_n\setminus sF_n)/\mu(F_n)=\mu(s(s^{-1}F_n\setminus F_n))/\mu(F_n)=\mu(s^{-1}F_n\setminus F_n)/\mu(F_n)\to0$.

Conversely, suppose $(F_n)_{n=1}^\infty$ is a sequence of compact subsets of positive measure in $G$ such that $\mu(sF_n\vartriangle F_n)/\mu(F_n)\le\frac1n$ for all $s\in S$. We claim more generally that $\mu(gF_n\vartriangle F_n)/\mu(F_n)\to0$ for all $g\in G$. Indeed, since $S$ generates $G$ as a semigroup, an arbitrary element $g\in G$ can be written in the form $g=s_1\cdots s_m$ with $s_i\in S$, and then using the well-known and easily verified fact that $\mu(A\vartriangle B)$ satisfies the triangle inequality we obtain
\[
\begin{split}
\mu(gF_n\vartriangle F_n)\le\mu(s_1\cdots s_{m-1}(s_mF_n\vartriangle F_n))+\mu(s_1\cdots s_{m-2}(s_{m-1}F_n\vartriangle F_n))+\cdots+\mu(s_1F_n\vartriangle F_n)\\
    \le\frac mn\mu(F_n)\to0.\qquad\qquad\qquad\qquad\qquad\qquad\qquad\qquad\qquad\qquad\qquad\qquad
\end{split}
\]
The implication (iv) $\implies$ (v) of \cite[Theorem G.3.1]{BHV} then implies that $L^\infty(G)$ admits an invariant mean, so that $G$ is \emph{amenable} in the sense of \cite{emerson-greenleaf}, and then the implication (amenable) $\implies$ (A) proved in \cite[\S1.2]{emerson-greenleaf} implies that $G$ is amenable in our sense by the remarks at the end of \cite[\S1.2]{emerson-greenleaf}.
\end{proof}
In the present work we need the following analogous result for geometric amenability.
\begin{lemma}\label{lem:geom.amen.gen.set}
Suppose $G$ is a locally compact group, and $S\subseteq G$ is a compact set generating $G$ as a semigroup. Then $G$ is geometrically amenable if and only if for each $\eps>0$ there exists a compact set $F\subseteq G$ of positive measure such that $\sup_{s\in S}\mu(Fs\vartriangle F)/\mu(F)\le\eps$.
\end{lemma}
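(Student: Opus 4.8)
The plan is to reduce both directions to the already-established \cref{lem:amen.gen.set} and \cref{lem:amen-unimod}, exploiting the fact that the inversion map $g\mapsto g^{-1}$ interchanges left and right translation while unimodularity renders the Haar measure insensitive to this interchange. The idea is that geometric amenability is just amenability read through inversion, with \cref{lem:amen-unimod} supplying the equivalence between geometric amenability and the conjunction of amenability and unimodularity, and unimodularity being exactly what makes $\mu$ bi-invariant and symmetric (so that $\mu(A)=\mu(A^{-1})$ for every Borel set $A$); the ``right'' statement of the lemma then becomes a transcription of the ``left'' statement of \cref{lem:amen.gen.set}.

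For the forward direction, I would first invoke \cref{lem:amen-unimod} to deduce that $G$ is amenable and unimodular. Applying the forward implication of \cref{lem:amen.gen.set} to the generating set $S$, for each $\eps>0$ I obtain a compact $F$ of positive measure with $\sup_{s\in S}\mu(sF\vartriangle F)/\mu(F)\le\eps$. Setting $F'=F^{-1}$ and using symmetry of $\mu$ gives $\mu(F's^{-1}\vartriangle F')=\mu(sF\vartriangle F)$ for every $s$, while $\mu(F')=\mu(F)$; then right-invariance of $\mu$ (valid since $G$ is unimodular) lets me replace $s^{-1}$ by $s$, yielding $\sup_{s\in S}\mu(F's\vartriangle F')/\mu(F')\le\eps$, which is exactly the desired condition.

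For the converse, the first step is to extract unimodularity from the hypothesis alone. Taking $\eps_n\to0$ produces compact sets $F_n$ of positive measure with $\sup_{s\in S}\mu(F_ns\vartriangle F_n)/\mu(F_n)\le\eps_n$; comparing $\mu(F_ns)=\Delta_G(s)\mu(F_n)$ with the elementary identity $\mu(F_ns)-\mu(F_n)=\mu(F_ns\setminus F_n)-\mu(F_n\setminus F_ns)$ shows $|\Delta_G(s)-1|\le\eps_n$, so $\Delta_G(s)=1$ for every $s\in S$ and hence $\Delta_G\equiv1$ since $S$ generates $G$. Once $G$ is known to be unimodular, I reflect through inversion again: with $F_n'=F_n^{-1}$, symmetry gives $\mu(s^{-1}F_n'\vartriangle F_n')=\mu(F_ns\vartriangle F_n)$, and left-invariance of $\mu$ turns this into approximate left invariance under all of $S\cup S^{-1}$, a compact symmetric set generating $G$ as a semigroup. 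The converse implication of \cref{lem:amen.gen.set} then shows $G$ is amenable, and a final appeal to \cref{lem:amen-unimod} upgrades amenability and unimodularity back to geometric amenability.

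The main obstacle is organisational rather than technical: one must arrange the argument so that unimodularity is secured \emph{before} the inversion trick is applied, since it is precisely unimodularity that equates $\mu(A)$ with $\mu(A^{-1})$ and makes $\mu$ right-invariant, and these are the two facts that allow right-translation symmetric differences to be rewritten as left-translation ones. Everything else is routine bookkeeping with the set identities $(A\vartriangle B)^{-1}=A^{-1}\vartriangle B^{-1}$ and $(A\vartriangle B)g=Ag\vartriangle Bg$ together with the invariance properties of $\mu$.
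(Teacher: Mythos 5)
Your proof is correct and follows essentially the same route as the paper's: both arguments first extract unimodularity from the right F\o lner condition via the modular homomorphism (your $|\Delta_G(s)-1|\le\eps_n$ versus the paper's $\Delta_G(s)\le 1+\frac1n$ plus semigroup generation), and both then use the inversion symmetry $\mu(A)=\mu(A^{-1})$ of the Haar measure in the unimodular case to pass between the left condition of \cref{lem:amen.gen.set} and the right condition of the present lemma, with \cref{lem:amen-unimod} bridging amenability and geometric amenability. The only difference is organisational: the paper disposes of the non-unimodular case for both conditions simultaneously and then chains equivalences, whereas you run the inversion trick separately in each direction.
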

\begin{proof}
We first show that if $G$ is not unimodular then neither condition holds. It is convenient to prove the contrapositive. If $G$ is geometrically amenable this is immediate from \cref{lem:amen-unimod}. On the other hand, if $(F_n)_{n=1}^\infty$ is a sequence of compact subsets of positive measure in $G$ such that $\mu(F_ns\vartriangle F_n)/\mu(F_n)\le\frac1n$ for all $s\in S$, then $\mu(F_ns)\le(1+\frac1n)\mu(F_n)$ for every $s\in S$ and $n\in\N$, hence $\Delta_G(s)\le1+\frac1n$ for every $s\in S$ and $n\in\N$, and hence $\Delta_G(s)\le1$ for every $s\in S$. Since $S$ generates $G$ as a semigroup, this implies that $\Delta_G\equiv1$ on $G$ as claimed.

We may therefore assume that $G$ is unimodular, and in particular that $\mu$ is symmetric. By \cref{lem:amen-unimod}, $G$ is then geometrically amenable if and only if it is amenable; by \cref{lem:amen.gen.set}, $G$ is amenable if and only if for each $\eps>0$ there exists a compact set $F\subseteq G$ of positive measure such that $\sup_{t\in S^{-1}}\mu(sF\vartriangle F)/\mu(F)\le\eps$; and by symmetry of $\mu$, this occurs if and only if for each $\eps>0$ there exists a compact set $F^{-1}\subseteq G$ of positive measure such that $\sup_{s\in S}\mu(F^{-1}s\vartriangle F^{-1})/\mu(F^{-1})\le\eps$.
\end{proof}

\section{Lipschitz proper actions and geometric amenability}

In this section we generalise \cref{thm:LipActionOnAmenable} to graphs of unbounded degree. This generalisation necessitates a further definition: we will say that a locally finite graph is $r$-amenable for $r\geq 1$, and for all $\eps>0$, there exists a finite set of vertices $F$ such that $|\partial_rF|/|F|\leq \eps$. When $r=1$, we simply recover the usual notion of amenability. Note that if the graph has uniformly bounded degrees then amenability implies $r$-amenability for all $r$.
\begin{prop}\label{prop:LipActionOnAmenable}
Let $r\in\N$. Suppose $G$ is a compactly generated, totally disconnected, locally compact group acting $r$-Lipschitz properly on a locally finite $r$-amenable graph $\Gamma$. Then $G$ is geometrically amenable.
\end{prop}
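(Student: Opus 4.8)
The plan is to verify geometric amenability directly through the generating-set criterion of \cref{lem:geom.amen.gen.set}, taking for $S$ the compact symmetric generating set furnished by the $r$-Lipschitz hypothesis. Recall that $r$-Lipschitzness provides, in each orbit $O$ of the action, a base vertex $o$ for which the orbit map satisfies $d(g\cdot o,h\cdot o)\le r\,d_S(g,h)$; in particular $d(s\cdot o,o)\le r$ for every $s\in S$. The idea is then to pull a suitable finite subset of $\Gamma$ back to a compact subset of $G$ through such a base vertex and to show that its right $S$-boundary is small, exactly as in the proof of \cref{prop:trans}, but taking care of the failure of transitivity.

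The hard part is precisely this failure of transitivity: the $r$-amenable sets $A$ provided by the hypothesis typically meet infinitely many orbits, and the naive pullback $\{g:\exists i,\ g\cdot o_i\in A\}$ is a union of one piece per orbit, and these pieces overlap uncontrollably in $G$. Worse, the stabilisers $G_{o_i}$ of different orbits may have wildly different Haar measures, so that even a weighted vertex count is not controlled by the unweighted ratio $|\partial_rA|/|A|$. I would resolve this by refusing to combine orbits. For each $n$, choose $A_n$ with $|\partial_rA_n|/|A_n|\le\eps_n\to0$; a simple averaging argument then produces a \emph{single} orbit $O_{j}$ (depending on $n$) with $|A_n\cap O_{j}|>0$ and $|\partial_rA_n\cap O_{j}|\le\eps_n\,|A_n\cap O_{j}|$, since the overall ratio is a weighted average of the orbitwise ratios. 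Setting $F_n=G_{o_{j}\to A_n\cap O_{j}}$, the pullback through the base vertex of this one orbit, the orbitwise analogue of \eqref{eq:pullback.mu}, namely $\mu(G_{o_j\to X})=|X\cap O_j|\,\mu(G_{o_j})$, applies because $G$ acts transitively on $O_j$.

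The computation then mirrors the transitive case. For $s\in S$, any $gs\in F_ns$ has $d(gs\cdot o_j,g\cdot o_j)=d(s\cdot o_j,o_j)\le r$ with $g\cdot o_j\in A_n$, so $gs\cdot o_j\in[A_n]_r$; hence $F_ns\setminus F_n\subseteq G_{o_j\to\partial_rA_n\cap O_j}$ and $\mu(F_ns\setminus F_n)\le|\partial_rA_n\cap O_j|\,\mu(G_{o_j})$. Applying the same bound to $s^{-1}\in S$ and using $\mu(F_n\setminus F_ns)=\Delta_G(s)\,\mu(F_ns^{-1}\setminus F_n)$ controls the other half of the symmetric difference up to the factor $\Delta_G(s)$, which is at most $M:=\sup_{s\in S}\Delta_G(s)<\infty$ because $\Delta_G$ is continuous and $S$ is compact. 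The decisive point is that $\mu(G_{o_j})$ now cancels in the Følner ratio, eliminating all dependence on the varying stabiliser sizes, leaving
\[
\frac{\mu(F_ns\vartriangle F_n)}{\mu(F_n)}\le(1+M)\,\frac{|\partial_rA_n\cap O_j|}{|A_n\cap O_j|}\le(1+M)\,\eps_n
\]
uniformly in $s\in S$.

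Since $\eps_n\to0$ and each $F_n$ is compact of positive measure, \cref{lem:geom.amen.gen.set} (applied with $S$, which generates $G$ as a semigroup by symmetry) yields that $G$ is geometrically amenable. The one subtlety to flag is that the selected orbit $O_{j}$ is permitted to vary with $n$; this is harmless precisely because \cref{lem:geom.amen.gen.set} asks only for a sequence of approximately invariant sets and not for a sequence living over a single orbit, and it is exactly this freedom that allows the cancellation of $\mu(G_{o_j})$ to absorb the non-unimodular behaviour of $G$.
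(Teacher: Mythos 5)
Your proof is correct and takes essentially the same approach as the paper's: you pigeonhole over the orbits to find a single orbit on which the F\o lner ratio is small, pull back through a base vertex so that the stabiliser measure $\mu(G_{o_j})$ cancels, control the symmetric difference by the factor $1+\sup_{s\in S}\Delta_G(s)$, and conclude via \cref{lem:geom.amen.gen.set}. The differences are purely organisational: you work directly with $r$-boundaries and inline the coset computation that the paper delegates to \cref{lem:quantitativeVT} and \cref{lem:S.gen}, after first reducing to $r=1$ and invoking connectivity of orbits via \cref{lem:almostLipImpliesCoarseConnected}.
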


Before proving \cref{prop:LipActionOnAmenable} we present two lemmas.

\begin{lemma}\label{lem:almostLipImpliesCoarseConnected}
Suppose $G$ is a compactly generated locally compact group acting properly on a locally finite graph $\Gamma$. Then the action of $G$ on $\Gamma$ is contingently $1$-Lipschitz if and only if every subgraph induced by an orbit of $G$ on $\Gamma$ is connected.
\end{lemma}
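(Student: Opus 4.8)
The plan is to prove both implications by working, for each vertex $x$, with the natural candidate generating set
\[
S_x=\{g\in G:d(g\cdot x,x)\le1\},
\]
exactly as in the transitive setting of \cref{lem:S.gen}; the only change is that where that proposition used transitivity to show the analogous set generates $G$, here I will use connectivity of the orbit instead. I would first record the basic properties of $S_x$: it is symmetric by the computation $d(g^{-1}\cdot x,x)=d(g^{-1}\cdot x,g^{-1}g\cdot x)=d(g\cdot x,x)$; it is compact and open because it equals the finite union $\bigcup_y g_yG_x$ of cosets of the compact open stabiliser $G_x$, taken over the finitely many orbit vertices $y$ equal or adjacent to $x$ (here local finiteness and properness enter); and, crucially, $G_x\subseteq S_x$.

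For the forward implication, suppose the action is contingently $1$-Lipschitz and fix an orbit $G\cdot x$ together with a generating set $S$ witnessing the $1$-Lipschitz property at $x$. Given $g\in G$, I would write $g=s_1\cdots s_n$ with $s_i\in S$ and consider the partial products $e,s_1,s_1s_2,\ldots,g$. Consecutive partial products differ by a single generator, so the $1$-Lipschitz hypothesis forces their images under the orbit map to be equal or adjacent in $\Gamma$; hence $x,s_1\cdot x,\ldots,g\cdot x$ is a walk inside the subgraph induced by the orbit joining $x$ to $g\cdot x$. As $g$ ranges over $G$, this shows every orbit vertex is joined to $x$ within the induced subgraph, so that subgraph is connected.

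For the converse, suppose every orbit induces a connected subgraph, fix $x$, and take $S_x$ as above; it remains to check that $S_x$ generates $G$, after which $1$-Lipschitzness is routine. Writing $H=\langle S_x\rangle$, the key step is to show $H\cdot x=G\cdot x$: given an orbit vertex $z$, pick a path $x=y_0,y_1,\ldots,y_m=z$ in the induced subgraph and argue by induction that each $y_i\in H\cdot x$. If $y_i=h\cdot x$ with $h\in H$, then $h^{-1}\cdot y_{i+1}$ lies in the orbit at distance $1$ from $x$, so $h^{-1}\cdot y_{i+1}=s\cdot x$ for some $s\in S_x$, and thus $y_{i+1}=hs\cdot x\in H\cdot x$. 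Once $H\cdot x=G\cdot x$ is established, I would finish using $G_x\subseteq S_x\subseteq H$: for arbitrary $g\in G$ there is $h\in H$ with $h\cdot x=g\cdot x$, whence $h^{-1}g\in G_x\subseteq H$ and so $g\in H$, giving $H=G$. Finally, for $g,g'\in G$ with $g^{-1}g'\in S_x$ one has $d(g\cdot x,g'\cdot x)=d(x,g^{-1}g'\cdot x)\le1$, and concatenating along a geodesic word in $S_x$ shows the orbit map is $1$-Lipschitz into the induced subgraph metric.

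The main obstacle is precisely the generation step in the converse. Connectivity of the orbit directly yields only that $S_x$ moves $x$ throughout the entire orbit, i.e.\ $H\cdot x=G\cdot x$; promoting this to $H=G$ relies essentially on having built the whole stabiliser $G_x$ into $S_x$, which is also what makes $S_x$ a genuine \emph{compact} generating set rather than merely a generating set. I therefore expect the bookkeeping around $G_x\subseteq S_x$ to be the one point requiring care, whereas the two walk-lifting arguments are straightforward unwindings of the definitions.
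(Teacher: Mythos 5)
Your proof is correct and takes essentially the same approach as the paper: the forward direction is the paper's observation that the connected Cayley graph $(G,S_x)$ has connected image under the orbit map, and the converse uses the same set $S_x=\{g\in G:d(g\cdot x,x)\le1\}$, which the paper handles by applying \cref{lem:S.gen} to the induced orbit subgraph. The only difference is that you inline the generation argument (path-lifting plus $G_x\subseteq S_x$) rather than citing \cref{lem:S.gen}, but that is precisely the content of that proposition's proof.
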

\begin{proof}
Suppose first that the action is contingently $1$-Lipschitz. Given $x\in\Gamma$, there therefore exists a generating set $S_x$ such that $g\mapsto g\cdot x$ is a $1$-Lipschitz map from the Cayley graph $(G,S_x)$ to $X$. The fact that $(G,S_x)$ is connected implies that the range of this map is connected as well, hence the orbit of $x$ is connected.

Conversely, suppose that the orbit of $x$ is connected. Then the subgraph $\Gamma_x$ induced by $G\cdot x$ is a connected locally finite vertex-transitive graph, and by \cref{lem:S.gen} the set $S_x=\{g\in G:d_{\Gamma_x}(g\cdot x,x)\le1\}$ is a symmetric compact open generating set for $G$, with respect to which $g\mapsto g\cdot x$ is trivially $1$-Lipschitz.
\end{proof}

\begin{lemma}\label{lem:quantitativeVT}
Suppose $G$ is a locally compact group with Haar measure $\mu$ acting transitively on a locally finite graph $\Gamma$. Suppose further that this action is $1$-Lipschitz with respect to some compact symmetric generating set $S$ for $G$. Then for every finite subset $F\subseteq\Gamma$ there exists a compact open subset $A\subseteq G$ such that
\[
\frac{\mu(AS\setminus A)}{\mu(A)}\le\frac{|\partial F|}{|F|}.
\]
\end{lemma}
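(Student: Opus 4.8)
The plan is to relate the measure of $A$ and the boundary-type set $AS\setminus A$ to the cardinalities of a vertex set $F$ and its boundary $\partial F$, using the machinery of \cref{lem:S.gen}. Since the action is transitive and $1$-Lipschitz with respect to $S$, I want to set $A = G_{o\to F} = \{g : g\cdot o\in F\}$ for a suitable basepoint $o$. By \cref{lem:S.gen}, $A$ is compact and open with $\mu(A) = |F|\cdot\mu(G_o)$, and more generally $\mu(AS^r) = |[F]_r|\cdot\mu(G_o)$.

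**First I would** verify that the $1$-Lipschitz hypothesis lets me control $AS$ rather than $SA$, since \cref{lem:S.gen} as stated deals with right-multiplication by $S^r$ of the pullback set $G_{o\to X}$. The key computation in \cref{lem:S.gen} shows $G_{o\to X}S^r = G_{o\to [X]_r}$, so with $r=1$ I get $AS = G_{o\to F}S = G_{o\to[F]_1} = G_{o\to F\cup\partial F}$. **Then** $AS\setminus A = G_{o\to\partial F}$, which has measure $|\partial F|\cdot\mu(G_o)$ by \eqref{eq:pullback.mu}. Dividing gives exactly
\[
\frac{\mu(AS\setminus A)}{\mu(A)} = \frac{|\partial F|\cdot\mu(G_o)}{|F|\cdot\mu(G_o)} = \frac{|\partial F|}{|F|},
\]
which is even sharper than the claimed inequality (equality, in fact).

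**The main subtlety** I expect is the role of the $1$-Lipschitz hypothesis and the choice of $S$. \cref{lem:S.gen} is stated for the specific generating set $S = \{g : d(g\cdot o, o)\le 1\}$, whereas here $S$ is an arbitrary symmetric generating set with respect to which the action is $1$-Lipschitz. I need to check that $1$-Lipschitzness of the orbit map $g\mapsto g\cdot o$ forces $S\subseteq\{g : d(g\cdot o,o)\le 1\}$ (so that $g\in S$ implies $g\cdot o$ is within distance $1$ of $o$), which is exactly the statement that $S\cdot o\subseteq [o]_1$. This inclusion yields $AS \subseteq G_{o\to[F]_1}$ rather than equality, giving the inequality $\mu(AS\setminus A)\le |\partial F|\cdot\mu(G_o)$ as claimed. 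So the $1$-Lipschitz condition is precisely what replaces the exact identity $G_{o\to X}S = G_{o\to[X]_1}$ (valid for the canonical $S$) by the one-sided containment needed for the stated bound. The transitivity assumption guarantees the hypotheses of \cref{lem:S.gen} apply to this action with basepoint $o$, and properness plus continuity give compactness and openness of $G_o$ and hence of $A$.
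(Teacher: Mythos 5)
Your proposal is correct and is essentially identical to the paper's own proof: the paper also takes $A=G_{x\to F}$ with $x$ the vertex witnessing the $1$-Lipschitz property, applies \cref{lem:S.gen} with the canonical set $\hat S=\{g\in G:d(x,g\cdot x)\le1\}$ to get equality $\mu(G_{x\to F}\hat S\setminus G_{x\to F})/\mu(G_{x\to F})=|\partial F|/|F|$, and then uses exactly your observation that $1$-Lipschitzness forces $S\subseteq\hat S$, whence $G_{x\to F}S\subseteq G_{x\to F}\hat S$ and the stated inequality. You correctly identified the one genuine subtlety (the given $S$ versus the canonical $\hat S$), so there is nothing to add.
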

\begin{proof}
Let $x\in\Gamma$ such that the orbit map $(G,S)\to\Gamma$, $g\mapsto g\cdot x$ is $1$-Lipschitz. Let $\hat{S}=\{g\in G:d(x,g\cdot x)\le1\}$. \cref{lem:S.gen} implies that $G_{x\to F}$ is a compact open set satisfying
\[
\frac{\mu(G_{x\to F}\hat S\setminus G_{x\to F})}{\mu(G_{x\to F})}=\frac{|\partial F|}{|F|}.
\]
Moreover, the fact that the orbit map is $1$-Lipschitz implies that $d(x,gx)\leq |g|_S$ for all $g\in G$. Applying this to $g\in S$ implies that $S\subseteq\hat{S}$, hence that $G_{x\to F}S\subseteq G_{x\to F}\hat S$, and hence that $\mu(G_{x\to F}S\setminus G_{x\to F})\le\mu(G_{x\to F}\hat S\setminus G_{x\to F})$, so that we may take $A=G_{x\to F}$.
\end{proof}

\begin{proof}[Proof of \cref{prop:LipActionOnAmenable}]
Upon adding edges between all pairs of vertices at distance at most $r$, we may assume that $r=1$. 
By \cref{lem:almostLipImpliesCoarseConnected}, the subgraph induced by each orbit is connected. Pick a compact symmetric generating set $S$, and a vertex $z$ in each orbit such that the orbit map $g\to g\cdot z$ is $1$-Lipschitz with respect to $S$, 
and write $Z$ for the set of such $z$. For every $z\in Z$, denote by $Y_z$ the graph induced by the orbit of $z$, and let $Y=\bigsqcup_{z\in Z} Y_z$.  In other words, $Y$ is obtained from $\Gamma$  by removing all edges joining different orbits. Note that each $Y_z$ is a vertex-transitive graph such that the orbit map $(G,S)\to Y_z$, $g\mapsto g\cdot z$ is $1$-Lipschitz.

Let $\eps>0$. By amenability of $\Gamma$ there exists $F\subseteq\Gamma$ be such that $|\partial_F|/|F|\le\eps$. Write $\partial^YF$ for the external boundary of $F$ in $Y$, noting that
$\partial^YF\subseteq\partial F$ and that $\partial^YF=\bigsqcup_{z\in Z} \partial^YF_z$, where $F_z=F\cap G\cdot z$.
By the pigeonhole principle, there exists $z\in Z$,  such that $|\partial^Y_1F_z|/|F_z|\leq |\partial^YF|/|F|\leq  |\partial F|/|F|\leq \eps$.
Applying Lemma \ref{lem:quantitativeVT} to the action of $G$ on the vertex-transitive graph $Y_z$, we therefore conclude that there exists a compact open set $A\subseteq G$ such that $\mu(AS\setminus A)/\mu(A)\le\eps$. In particular, this implies that
\[
\begin{split}
\sup_{s\in S}\frac{\mu(As\vartriangle A)}{\mu(A)}=\sup_{s\in S}\frac{\mu(As\setminus A)+\mu((As^{-1}\setminus A)s)}{\mu(A)}\qquad\qquad\qquad\qquad\qquad\\
         \le\big(1+\sup_{s\in S}\Delta_G(s)\big)\frac{\mu(AS\setminus A)}{\mu(A)}\le\big(1+\sup_{s\in S}\Delta_G(s)\big)\eps,
\end{split}
\]
so that $G$ is geometrically amenable by \cref{lem:geom.amen.gen.set}.
\end{proof}

\section{The space of $G$-transitive graphs}\label{sec:G-trans}

In this section we prove \cref{prop:Folstar.versus.h,thm:h.semi-continuous}.
\begin{proof}[Proof of \cref{prop:Folstar.versus.h}]
Let us start proving that $\Fol^\ast G\leq \h_G$. Assume the existence of a sequence $\Gamma_n$ of proper $G$-transitive graphs, and of finite subsets $A_n$ such that $|\partial A_n|/|A_n|\to\Fol^\ast G$. Let $o_n$ be some vertex in $\Gamma_n$, denote by $K_n$ the stabilizer of $o_n$ in $G$, and let $S_n=\{g\in G:d(g\cdot o_n,o_n)\le1\}$.  By \cref{lem:S.gen}, $S_n$ is a compact open generating subset of $G$, and we have that \[\mu(G_{o_n\to A_n}S_n)/\mu(G_{o_n\to A_n})=|[A_n]_1|/|A_n|.\] Hence we deduce that $\Fol^\ast G\leq \h_G$. 

Observe that compact subsets of the form $\{g\in G:d(g\cdot o,o)\le1\}$ for a proper $G$-transitive pointed graph $(\Gamma,o)$ satisfy $S=KSK$ for some compact open subgroup $K$. The main point of the converse inequality is to show that in the definition of $\Fol^\ast G$, we only need to take the infimum over such generating subsets.
Precisely: given $\eps>0$, two compact subsets $S$ and $F$ such that $\mu(F)>0$, we claim that there exists a compact open subgroup $K$ such that $\mu(FKSK\setminus FS)\leq \eps\mu(FS)$.
Since the Haar measure is regular, one can find an open neighbourhood of the identity $W$ of $G$ such that $\mu(FSW\setminus FS)\leq \eps\mu(FS)$.
We now have to find a compact open subgroup $K$ such that $FKSK\subseteq FSW$.
Using that the multiplication is continuous, we see that there exist neighbourhoods $S'$ and $F'$ of $S$ and $F$ such that $F'S'\subseteq FSW$. 
Now because $S$ and $F$ are compact, such neighbourhoods can be taken to be of the form $S'=SU$ and $F'=FU$ for some neighbourhood $U$ of the neutral element in $G$. But since $G$ is totally disconnected, $U$ contains some compact open subgroup $K$. So we finally have that $FKSK\subseteq FSW$, and so the claim follows.

We are now ready to prove that $\h_G\leq \Fol^\ast G$. Consider a sequence of compact symmetric generating subsets $S_n$ and a sequence of compact subsets of positive measure $F_n$ such that $\mu(F_nS_n\setminus F_nS_n)/\mu(F_n)$ tends to $\Fol^\ast G$.
By our claim, we deduce the existence of a sequence of compact open subgroups $K_n$ such that $\mu(F_nK_nS_nK_n\setminus F_nS_n)/\mu(F_n)$ tends to zero. Hence, $\mu(F_nK_nS_nK_n\setminus F_n)/\mu(F_n)$ tends to $\Fol^\ast G$. 
Letting $S'_n=K_nS_nK_n$, we deduce that \[\liminf\mu(F_nK_n S'_n\setminus F_nK_n)/\mu(F_nK_n)\leq \Fol^\ast G.\] 

Consider now the Cayley--Abels graph $\Gamma_n$ obtained as right quotient of $(G_n,S'_n)$ by $K_n$ (see \cite[Proposition 2.E.9]{CH} for the definition of a Cayley--Abels graph, originally due to Abels \cite{ab}). Denote by $\pi_n$ the projection modulo $K_n$. Let $A_n=\pi_n(F_n)$. Since $F_nK_n$ and $F_nK_n S'_n\setminus F_nK_n$ are unions of $K_n$ left cosets, we have 
\[\partial A_n=\pi(F_nK_n S'_n)\setminus \pi(F_nK_n)=\pi(F_nK_n S'_n\setminus F_nK_n).\]
Now, because $\mu$ is left-invariant, we deduce that
$|\partial A_n|\mu(K_n)=\mu(F_nK_n S'_n\setminus F_nK_n)$, and $|A_n|\mu(K_n)=\mu(F_nK_n)$. Hence $\liminf|\partial A_n|/|A_n|\leq \Fol^\ast G$ as required.
\end{proof}

We now turn to the proof of Theorem \ref{thm:h.semi-continuous}. 
Recall that the isoperimetric profile of a  graph $\Gamma$ is defined via
\[j_\Gamma(n)=\inf_{|A|\leq n}\left\{\frac{|\partial A|}{|A|}\right\}.\]
\begin{prop}\label{prop:isopcont}
For every $n\in \N$, the map $\Gamma\mapsto j_\Gamma(n)$ is continuous on $\mathfrak{G}$. 
\end{prop}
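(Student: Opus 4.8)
The plan is to show that, for a suitable radius $R$ depending on $n$ and on the (locally constant) degree, the value $j_\Gamma(n)$ is completely determined by the isomorphism type of a ball of radius $R$ in $\Gamma$. Since graphs that are close in $\mathfrak{G}$ have isomorphic balls of large radius, this makes $\Gamma\mapsto j_\Gamma(n)$ locally constant, and in particular continuous.

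First I would reduce the infimum defining $j_\Gamma(n)$ to sets of bounded diameter. The hard part is precisely this reduction: an optimal $A$ could a priori be disconnected and spread arbitrarily far across $\Gamma$, so that $A\cup\partial A$ need not lie in any fixed ball. To get around this, given a finite set $A$ I would decompose $[A]_1=A\cup\partial A$ into the connected components $C_1,\dots,C_\ell$ of the subgraph of $\Gamma$ induced by $[A]_1$, and set $B_j=C_j\cap A$ and $D_j=C_j\cap\partial A$. Since there are no edges between distinct components, one checks that $D_j=\partial B_j$, and hence that $A=\bigsqcup_j B_j$ and $\partial A=\bigsqcup_j\partial B_j$ are \emph{disjoint} unions. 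Consequently $|\partial A|/|A|=\bigl(\sum_j|\partial B_j|\bigr)/\bigl(\sum_j|B_j|\bigr)$, which is at least $\min_j|\partial B_j|/|B_j|$ (if $p_j/q_j\ge m$ for all $j$ then $(\sum_j p_j)/(\sum_j q_j)\ge m$). As each $B_j$ satisfies $|B_j|\le|A|\le n$ and has connected closure $B_j\cup\partial B_j=C_j$, this yields
\[
j_\Gamma(n)=\inf\Bigl\{\tfrac{|\partial B|}{|B|}\ :\ 1\le|B|\le n,\ B\cup\partial B\text{ connected}\Bigr\}.
\]
It is the passage to components of $A\cup\partial A$, rather than of $A$, that makes the boundaries disjoint and this bound valid.

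Next I would localise around a fixed $\Gamma_0\in\mathfrak{G}$. The degree $d_0$ of $\Gamma_0$ is read off from its ball of radius $1$, so every $\Gamma$ with $d(\Gamma,\Gamma_0)\le\tfrac12$ is regular of the same degree $d_0$; I may therefore work under the standing assumption that $\Gamma$ is $d_0$-regular. For a set $B$ as above, $|\partial B|\le d_0|B|\le d_0 n$, so its connected closure $B\cup\partial B$ has at most $(1+d_0)n$ vertices and hence diameter less than $R:=(1+d_0)n$. By vertex-transitivity I may apply an automorphism moving one vertex of $B\cup\partial B$ to a fixed root $o$, after which $B\cup\partial B\subseteq B(o,R-1)$; in particular every neighbour of every vertex of $B$ already lies in $B\cup\partial B\subseteq B(o,R)$, so both $B$ and $\partial B$, together with all edges needed to compute them, are visible inside the induced ball $B(o,R)$.

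Combining the two steps, $j_\Gamma(n)$ is obtained by a purely finite combinatorial recipe applied to the rooted ball $B(o,R)$: enumerate the subsets $B$ of its vertex set with $1\le|B|\le n$ whose closure is connected and contained in $B(o,R-1)$, and take the minimum of $|\partial B|/|B|$ (this family is nonempty, e.g.\ $B=\{o\}$). This recipe transports across any rooted isomorphism of $R$-balls, and so yields the same number for two graphs whose balls of radius $R$ are isomorphic, i.e.\ whenever $d(\Gamma,\Gamma_0)\le 2^{-R}$. Hence $j_\Gamma(n)$ is constant on a neighbourhood of $\Gamma_0$, proving continuity. I expect the only genuine subtlety to be the decomposition in the first step; once disconnected minimisers are tamed, the remaining ball-counting is routine.
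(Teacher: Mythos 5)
Your proof is correct and follows essentially the same route as the paper's: the paper also tames disconnected minimisers by splitting $A$ into pieces whose boundaries are disjoint (there, pieces at pairwise distance at least $3$, i.e.\ ``$2$-connected components''), applies the mediant inequality, concludes that $j_\Gamma(n)$ is computed over sets of bounded diameter (there, $2n$, independent of the degree), and deduces that $j_\Gamma(n)$ is determined by a ball of bounded radius, so your decomposition into connected components of $[A]_1$ and your degree-dependent radius $(1+d_0)n$ are only minor variations. If anything your write-up is the more careful one: the paper's displayed inequality is stated as $|\partial A|/|A|\le\min\{|\partial A_1|/|A_1|,|\partial A_2|/|A_2|\}$, whereas the mediant inequality (which you state correctly) gives $\ge\min$, which is what the argument actually needs.
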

\begin{proof}
Let $A$ be a finite subset of a graph $\Gamma$, and assume that  $A=A_1\sqcup A_2$ are such that $d(A_1,A_2)\geq 3$. Then we have $\partial A=\partial A_1\sqcup \partial A_2$. We therefore deduce that $|\partial A|=|\partial A_1|+|\partial A_2|$. Hence we deduce that 
\[\frac{|\partial A|}{|A|}\leq \min\left\{\frac{|\partial A_1|}{|A_1|},\frac{|\partial A|}{|A_2|}\right\}.\]
Hence $j_\Gamma(n)$ is attained on subsets $A$ that are $2$-connected: meaning that every pair of vertices $x,y$ can be joined by a chain of vertices $x=x_0,\ldots, x_k=y$ such that $d(x_i,x_{i+1})\leq 2$. 
Since such sets are contained in a ball of radius $2n$, we have that $j_\Gamma(n)=j_{\Gamma'}(n)$ as soon as $d(\Gamma,\Gamma')\leq 2^{-2n-1}$, meaning that the balls of radius $n+1$ of these two graphs coincide. This proves the proposition.
\end{proof}
\begin{proof}[Proof of Theorem \ref{thm:h.semi-continuous}]
Note that $\h_{\Gamma}=\inf_n j_\Gamma(n)$, so $\Gamma\mapsto \h_{\Gamma}$ is an infimum of continuous functions by \cref{prop:isopcont}, hence is upper semicontinuous.
\end{proof}

\section{$\Fol^\ast G$}

In this section we prove our various results about $\Fol^\ast G$, which recall we defined via
\[
\Fol^\ast G=\inf_S\inf_U\frac{\mu(US\setminus U)}{\mu(U)},
\]
where the infima are over all compact symmetric generating sets $S$ for $G$ and all compact subsets $U\subseteq G$. We start by observing that one direction of \cref{thm:deltaDense.simple} does not need the amenability assumption.
\begin{prop}\label{prop:dense}
Suppose $G$ is a non-unimodular locally compact group, and that $\Delta_G(G)$ is discrete. Then $G$ is uniformly geometrically non-amenable, i.e.\ $\Fol^\ast G>0$.
\end{prop}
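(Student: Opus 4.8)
The plan is to exploit the hypothesis of discreteness in the strongest possible way: it forces the nontrivial values of the modular homomorphism to be bounded away from $1$ by a fixed constant, and this single constant will bound $\Fol^\ast G$ from below uniformly over all generating sets. First I would record the structure of $\Delta_G(G)$. Since $G$ is non-unimodular, $\Delta_G(G)$ is a nontrivial subgroup of $\R^*_+$, and since it is assumed discrete, it is infinite cyclic; thus there exists $\lambda>1$ with $\Delta_G(G)=\{\lambda^n:n\in\Z\}$. This $\lambda$ is the quantity that will serve as the lower bound.

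Next I would argue that \emph{every} compact symmetric generating set $S$ must contain an element whose modular value is at least $\lambda$. Indeed, since $S$ generates $G$ and $\Delta_G$ is a homomorphism, the image $\Delta_G(S)$ generates the whole group $\{\lambda^n:n\in\Z\}$, so $\Delta_G(S)\neq\{1\}$; hence there is $s\in S$ with $\Delta_G(s)=\lambda^k$ for some $k\neq0$. Using that $S$ is symmetric, $s^{-1}\in S$ as well, and $\Delta_G(s^{-1})=\lambda^{-k}$, so replacing $s$ by $s^{-1}$ if necessary we obtain an element of $S$ with $\Delta_G(s)=\lambda^{|k|}\ge\lambda>1$. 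This is the step where discreteness (as opposed to mere nontriviality of $\Delta_G$) is genuinely used, and it is the only point requiring any care; I expect it to be the main, though modest, obstacle.

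Finally I would carry out the measure estimate. Fix any compact $U$ with $\mu(U)>0$ and the element $s\in S$ just produced. Since $s\in S$ we have $Us\subseteq US$, and by the defining property of the modular homomorphism $\mu(Us)=\Delta_G(s)\mu(U)\ge\lambda\mu(U)$. Writing $\mu(US\setminus U)=\mu(US)-\mu(US\cap U)$ and using $\mu(US)\ge\mu(Us)\ge\lambda\mu(U)$ together with the trivial bound $\mu(US\cap U)\le\mu(U)$, I obtain
\[
\frac{\mu(US\setminus U)}{\mu(U)}\ge\frac{\lambda\mu(U)-\mu(U)}{\mu(U)}=\lambda-1.
\]
Since this holds for every compact symmetric generating set $S$ and every compact $U$ of positive measure, taking the infimum yields $\Fol^\ast G\ge\lambda-1>0$, which is exactly uniform geometric non-amenability. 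No compactness or total-disconnectedness assumptions on $G$ are needed beyond what is stated, and the argument is entirely elementary once the constant $\lambda$ is isolated.
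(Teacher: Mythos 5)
Your proposal is correct and follows essentially the same route as the paper: both arguments note that a nontrivial discrete subgroup of $\R^*_+$ is cyclic with generator $\lambda>1$, use symmetry of $S$ to extract $s\in S$ with $\Delta_G(s)\ge\lambda$, and then bound $\mu(US\setminus U)\ge\mu(Us)-\mu(U)=(\Delta_G(s)-1)\mu(U)\ge(\lambda-1)\mu(U)$. Your write-up merely spells out in more detail the step that $\Delta_G(S)$ must generate $\Delta_G(G)$, which the paper leaves implicit.
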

Note that $\Delta_G(G)$ is either discrete or dense, so that this really does prove one direction of \cref{thm:deltaDense.simple}.

\begin{proof}[Proof of \cref{prop:dense}]
The fact that $\Delta_G(G)$ is non-trivial and discrete implies that it is cyclic, so that we may fix a generator $t>1$. Note, then, that every symmetric generating set $S$ for $G$ must contain an element $s$ such that $\Delta_G(s)\ge t$, so that for every compact subset $F$ of positive measure we have
\[
\mu(FS\setminus F)\ge\mu(Fs\setminus F)\ge\mu(Fs)-\mu(F)=(t-1)\mu(F).
\]
\end{proof}

\begin{examples}
Here is an example of a group with discrete image of the modular homomorphism: the affine group $\Aff(\Q_p)=\Q_p\rtimes \Z$ (for which $t=p$ in the proof of \cref{prop:dense}). On the other hand, modular homomorphism of the direct product $\Aff(\Q_p)\times\Aff(\Q_q)$ has dense image whenever $p$ and $q$ are not powers of a common integer. It turns out that this group is not uniformly geometrically non-amenable, as shown by the following proposition.
\end{examples}

\begin{prop}\label{prop:deltaDenseAlmostAm}
Suppose that $G$ is an amenable, non-unimodular compactly generated totally disconnected locally compact group. Then $\Fol^\ast G=0$ if and only if the image of the modular homomorphism is dense in $\R^*_+$. Moreover, if $\Delta_G$ is split and $\Fol^\ast G=0$,  then  there exists a sequence $(S_n)_{n=1}^\infty$ of generating subsets of the form $S_n=K\sqcup T_n$, where $K$ is a fixed compact subset of $\ker\Delta_G$ and $(T_n)_{n=1}^\infty$ is a sequence of finite subsets of bounded cardinality
 satisfying
\[
\frac{\mu(F_nS_n\setminus F_n)}{\mu(F_n)}\to0
\]
for some sequence $(F_n)_{n=1}^\infty$ of compact subsets of $G$.
\end{prop}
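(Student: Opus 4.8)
The plan is to prove the two directions of the equivalence separately and then read off the refined statement from the construction used for the harder direction. For the direction $\Fol^\ast G=0\Rightarrow\Delta_G(G)$ dense I would argue by contraposition: a subgroup of $\R^*_+\cong\R$ is either dense or discrete, and if $\Delta_G(G)$ is discrete then \cref{prop:dense} gives $\Fol^\ast G>0$. So the content is the converse, dense $\Rightarrow\Fol^\ast G=0$. The first thing I would record is the structure of $N:=\ker\Delta_G$. Every compact subgroup of $G$ has trivial modulus and so lies in $N$; since $G$ is totally disconnected, $N$ therefore contains a compact open subgroup and is open, so $G/N\cong\Delta_G(G)\cong\Z^k$ is discrete, with $k\ge2$ because the image is dense. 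Being an open subgroup, $N$ is unimodular (its modular function is the restriction $\Delta_G|_N\equiv1$), and being a closed subgroup of the amenable group $G$ it is amenable; hence by \cref{lem:amen-unimod} it is geometrically amenable, i.e.\ it admits right F\o lner sets.

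For the construction I would first treat the split case $G=N\rtimes A$ with $A=\langle a_1,\dots,a_k\rangle\cong\Z^k$, which is also exactly what yields the ``moreover'' clause. Writing $\Delta_G(a^v)=e^{L(v)}$ for the homomorphism $L\colon\Z^k\to\R$ with dense image, I would fix once and for all a compact $K\subseteq N$ with $\langle K,A\rangle=G$ (take the $N$-coordinates of a compact generating set of $G$). For each $M$ I then choose a basis $v_1,\dots,v_k$ of $\Z^k$ with $\max_j|L(v_j)|$ so small that $|L(u)|\le1/M$ on the whole box $B_M=\{\sum_jc_jv_j:0\le c_j<M\}$; such a basis exists by a one-line Diophantine reduction (pick a primitive $v_1$ with $0<|L(v_1)|$ tiny, extend to a basis, and reduce the other vectors modulo $v_1$). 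Setting $Q_M=\bigcup_{u\in B_M}{}^uK$ (a compact set) I pick a right F\o lner set $W_M\subseteq N$ for $Q_M$ using geometric amenability of $N$, and put $F_M=\bigsqcup_{u\in B_M}\{(w,u):w\in W_M\}$ and $S_M=K\sqcup T_M$ with $T_M=\{a^{\pm v_1},\dots,a^{\pm v_k}\}$.

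The estimates then split according to the two types of generator. Right-multiplication by $a^{v_j}$ fixes the $N$-coordinate and shifts the box, so $F_MS_M$ restricted to this part is $W_M$ over $B_M+v_j$; since the slab at $u$ has measure $e^{\mp L(u)}\mu_N(W_M)$, which varies by a factor at most $e^{1/M}$ over the box, the resulting boundary ratio is comparable to $|(B_M+v_j)\setminus B_M|/|B_M|\lesssim1/M$. Right-multiplication by $\kappa\in K$ preserves each coset and replaces $W_M$ in slab $u$ by $W_M\cdot{}^u\kappa\subseteq W_MQ_M$, so its contribution is bounded by the right F\o lner error of $W_M$. Summing over the (boundedly many) generators gives $\mu(F_MS_M\setminus F_M)/\mu(F_M)\to0$. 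As $K$ is fixed and $|T_M|\le2k$ is bounded, this simultaneously proves dense $\Rightarrow\Fol^\ast G=0$ in the split case and furnishes the sequence $S_M=K\sqcup T_M$ of the required form.

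For general (non-split) $G$ the same scheme works with a set-theoretic section $\gamma\colon\Z^k\to G$ in place of $A$: right-multiplication by a lift of $v_j$ now carries slab $u$ into slab $u+v_j$ up to a twist $m(u,j)\in N$ coming from the failure of $\gamma$ to be a homomorphism, and since $B_M$ is finite these twists form a finite (hence compact) set, so one simply enlarges $Q_M$ to contain them and their inverses and chooses $W_M$ right F\o lner for the enlargement. I expect the main obstacle to be exactly this simultaneous control: the conjugates ${}^uK$ needed to generate $N$, and the cocycle twists in the non-split case, both grow with the box, while the box itself must be long enough to be F\o lner in $\Z^k$, and all of this must be kept compatible with the requirement that the modular character stay within $e^{\pm1/M}$ of $1$ across the whole box. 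The Diophantine choice of basis is what reconciles these constraints, turning the ``horocyclic'' growth of slab measures that obstructs geometric amenability in the discrete-image case into a negligible distortion in the dense-image case.
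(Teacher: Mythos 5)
Your overall strategy coincides with the paper's: use \cref{prop:dense} (plus the discrete/dense dichotomy) for one direction; for the other, observe that $N=\ker\Delta_G$ contains a compact open subgroup and is therefore open, so that $G/N$ is a finitely generated torsion-free abelian group $\cong\Z^k$; note that $N$ is unimodular and amenable, hence geometrically amenable by \cref{lem:amen-unimod}; choose a basis of $\Z^k$ whose modular values are close to $1$ using density; and build right F\o lner sets of the form (box in $\Z^k$) times (right F\o lner set of $N$ adapted to the conjugates ${}^{u}K$). Your split-case argument is correct, and it is essentially the paper's construction: your requirement that $|L|\le 1/M$ on the whole box is a slightly stronger (but equally attainable) substitute for the paper's condition \eqref{eq:Vsmall}, and your slab-by-slab estimates match the estimates following it. This also correctly yields the ``moreover'' clause.

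The gap is in the non-split case, which is what the main equivalence requires in full generality. Since $\Fol^\ast G$ is an infimum over compact symmetric \emph{generating} sets, you must check that $S_M$ generates $G$, and you never do; moreover, with your fixed $K$ it need not. Writing $s=n_s\,\gamma(\pi(s))$ with $n_s\in K$ for $s$ in a compact generating set $S$, to recover $s$ from $K\cup T_M$ you need $\gamma(\pi(s))\in\langle K\cup T_M\rangle$; but $\gamma(\pi(s))$ differs from the corresponding word in the $\gamma(v_j)^{\pm1}$ by a product of cocycle values $\gamma(u)\gamma(v_j)\gamma(u+v_j)^{-1}$, which lie in $N$ but appear nowhere in your generating set --- enlarging $Q_M$, as you propose, repairs only the F\o lner estimate, not generation. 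These discrepancies change with the basis $(v_1,\dots,v_k)$, i.e.\ with $M$ (the basis vectors must grow as $M\to\infty$ to make $L(v_j)$ small), so they cannot be absorbed into a single fixed compact set; this is precisely why the proposition asserts a fixed $K$ only when $\Delta_G$ splits. The repair is the paper's \cref{lem:split.gen}: for each $M$ take $S_M=R_M\cup T_M$, where $R_M\subseteq N$ is a compact symmetric set depending on $M$ (e.g.\ the closure of $(ST_M^{-m}\cup T_M^{m}S)\cap N$, where $m$ is chosen so that $\pi(S)$ lies in the $m$-fold product of $\{v_1^{\pm1},\dots,v_k^{\pm1}\}$), and then enlarge $Q_M$ to contain $\bigcup_{u\in B_M}{}^{\gamma(u)}R_M$ together with the finitely many twists. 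With that change your estimates go through and the argument becomes the paper's.
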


We start with a lemma to help us construct the required generating sets $S_n$.

\begin{lemma}\label{lem:split.gen}
Suppose that $G$ is a compactly generated locally compact group, that $1\to N\to G\to Q\to 1$ is a short exact sequence of locally compact groups. Then given any compact generating set $U$ for $Q$, and any relatively compact lift $V$ of $U$ in $G$, there exists a compact symmetric subset $R$ of $N$ such that $R\cup V$ generates $G$. Moreover, if the sequence is split then there exists a fixed compact symmetric subset $K\subseteq N$ such that given any compact generating set $U$ for $Q$, the set $K\cup U$ generates $G$.
\end{lemma}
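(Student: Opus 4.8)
The plan is to establish the two assertions in turn, deducing the second from the first. Write $\pi\colon G\to Q$ for the quotient map, and set $U_0=U\cup U^{-1}\cup\{1\}$ and $V_0=V\cup V^{-1}\cup\{1\}$, so that $\pi(V_0)=U_0$ and $\langle V\rangle=\langle V_0\rangle$. For the first assertion I fix any compact symmetric generating set $S$ for $G$ with $1\in S$. Since $\pi(V)=U$ generates $Q$, the subgroup $H=\langle V\rangle$ satisfies $\pi(H)=Q$, and hence $G=HN$. My aim is therefore to produce a single compact subset of $N$ into which the ``$N$-parts'' of all elements of $S$ can be pushed: if I can write each $s\in S$ as $s=w_sr_s$ with $w_s\in\langle V\rangle$ and $r_s$ ranging over a fixed compact subset of $N$, then taking $R$ to be a symmetrisation of that compact set yields $s\in\langle R\cup V\rangle$ for every $s$, whence $\langle R\cup V\rangle\supseteq\langle S\rangle=G$.

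To carry this out uniformly I must control the length, as a word in $U_0$, of the elements of the compact set $\pi(S)\subseteq Q$, and this is the main obstacle, since an arbitrary compact generating set need not be a neighbourhood of the identity. I would resolve it via the Baire category theorem: as $Q$ is locally compact Hausdorff it is a Baire space, and $Q=\bigcup_{m\ge1}U_0^m$ is a countable union of compact (hence closed) sets, so some $U_0^M$ has nonempty interior; symmetry of $U_0^M$ then forces $1\in\operatorname{int}(U_0^{2M})$, so that $U_0^{2M}$ is a neighbourhood of the identity. Consequently the sets $U_0^k\cdot\operatorname{int}(U_0^{2M})$ form an increasing open cover of $Q$, and compactness of $\pi(S)$ gives $\pi(S)\subseteq U_0^{m'}$ for some $m'$.

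Given this, for each $s\in S$ I choose $w_s\in V_0^{m'}$ with $\pi(w_s)=\pi(s)$, which is possible because $\pi(V_0^{m'})=U_0^{m'}\supseteq\pi(S)$; then $r_s=w_s^{-1}s$ lies in $N$ (as $\pi(r_s)=1$) and in the compact set $\overline{V_0^{m'}}\,S$. Setting $R=\bigl(\overline{V_0^{m'}}\,S\cap N\bigr)\cup\bigl(\overline{V_0^{m'}}\,S\cap N\bigr)^{-1}$ gives a compact symmetric subset of $N$ containing every $r_s$, and since $w_s\in\langle V\rangle$ we obtain $s=w_sr_s\in\langle R\cup V\rangle$, which completes the first assertion.

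For the second assertion I use the splitting $\sigma\colon Q\to G$ to identify $U$ with $\sigma(U)\subseteq G$. The key point is that, for a split sequence, $\langle\sigma(U)\rangle=\sigma(\langle U\rangle)=\sigma(Q)$ is the \emph{same} subgroup $H$ for every generating set $U$, so it suffices to find one compact symmetric $K\subseteq N$ whose $\sigma(Q)$-conjugates generate $N$. To produce it I apply the first assertion to some fixed compact generating set $U_*$ of $Q$ with lift $\sigma(U_*)$, obtaining a compact symmetric $K\subseteq N$ with $\langle K\cup\sigma(U_*)\rangle=G$. Now $\langle K\cup\sigma(U_*)\rangle=H\cdot M$, where $M=\langle hKh^{-1}:h\in H\rangle\subseteq N$, and since $H\cap N=\{1\}$ forces $\langle K\cup\sigma(U_*)\rangle\cap N=M$, the equality with $G$ gives $M=N$. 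Finally, for an arbitrary compact generating set $U$ of $Q$, the group $\langle K\cup\sigma(U)\rangle$ contains both $\langle\sigma(U)\rangle=\sigma(Q)=H$ and $K$, hence contains $M=N$ as well as $H$, and therefore equals $NH=G$, as required.
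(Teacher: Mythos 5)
Your proof is correct. For the first assertion you follow essentially the same route as the paper: fix a compact symmetric generating set $S$ of $G$, trap $\pi(S)$ inside a finite power of the (symmetrised) generating set of $Q$, and factor each $s\in S$ as an element of $\langle V\rangle$ times an element of $N$ lying in a fixed compact set; the paper simply asserts that $\pi(S)\subseteq U^n$ for some $n$ and takes $R$ to be the closure of $(SV^{-n}\cup V^nS)\cap N$, whereas you justify that containment by the Baire category argument --- a standard step the paper leaves implicit, so this is a welcome addition rather than a discrepancy. Where you genuinely diverge is the split case. The paper applies the first part to the specific generating set $\pi(S)$ with lift $V=\sigma(\pi(S))$, obtaining $K$ with $\langle K\cup V\rangle=G$, and then handles an arbitrary compact generating set $U$ of $Q$ via the containment $V\subseteq U^m$ (the same ``compact set inside a finite power'' fact again), so that $\langle K\cup U\rangle\supseteq\langle K\cup V\rangle=G$. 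You instead fix an arbitrary $U_*$ and exploit the algebraic consequences of splitness: $\langle\sigma(U)\rangle=\sigma(Q)=H$ is the \emph{same} subgroup for every compact generating set $U$, and writing $\langle K\cup\sigma(U_*)\rangle=H\cdot M$ with $M$ the subgroup generated by the $H$-conjugates of $K$, the condition $H\cap N=\{1\}$ forces $M=N$, whence $\langle K\cup\sigma(U)\rangle\supseteq NH=G$ for every $U$. Your version isolates exactly where splitness enters ($H$ independent of $U$, and $H\cap N=\{1\}$) and avoids a second appeal to the Baire-type containment, at the cost of the conjugation/normal-generation bookkeeping; the paper's version is shorter and reuses the first part more directly. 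Both arguments are sound.
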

\begin{proof}Write $\pi:G\to Q$ for the quotient homomorphism. Let $S$ be a compact symmetric generating set for $G$, and fix $n\in\N$ so that $\pi(S)\subseteq U^n$. We claim that we may take $R$ to be the (compact) closure of $(SV^{-n}\cup V^nS)\cap N$.
Indeed, this set is symmetric by definition, and given $s$ in $S$, there exists $g$ in $V^n$ such that $sg^{-1}\in N$, hence $sg^{-1}\in R$, and hence $s=sg^{-1}g\in RV^n$. 

If the sequence is split then let $V=\pi(S)$, and let $K$ be the (compact) closure of $(SV^{-1}\cup VS)\cap N$, noting that $K\cup V$ generates $G$ by the previous paragraph. Then if $U$ is a compact generating set for $Q$, we have $V\subseteq U^m$ for some $m\in\N$, so that $K\cup U$ generates $G$ as required.
\end{proof}

\begin{proof}[Proof of \cref{prop:deltaDenseAlmostAm}]
As noted above, $\Delta_G(G)$ is either discrete or dense in $\R^*_+$, so \cref{prop:dense} implies that it is dense if $\Fol^\ast G=0$. It therefore remains to prove that $\Fol^\ast G=0$ assuming that $\Delta_G(G)$ is dense.

Being totally disconnected, $G$ has a compact open subgroup \cite[Theorem 7.7]{hew-ross}.
The image of this subgroup under $\Delta_G$ is a compact subrgoup of $\R$, and hence trivial, so $\Delta_G$ factors through a discrete, and hence finitely generated, quotient. Since $\R^*_+$ is abelian and torsion-free, $\Delta_G$ factors through a finitely generated torsion-free abelian quotient $G\to A\cong\Z^d$. Write $\pi:G\to A$ for the quotient homomorphism, and let $\Delta':A\to \R^*_+$ be an injective homomorphism such that $\Delta_G=\Delta'\circ\pi$. If $\Delta_G$ is split, let $K$ be the symmetric compact set given by \cref{lem:split.gen}.

We claim that $V\cap\Delta'(A)$ spans $\Delta'(A)$ for every neighbourhood $V$ of $1\in \R^*_+$. It is more convenient to work additively:\ consider $\delta'=\log \circ\Delta':A\to (\R,+)$, and take $W=\log V$ of the form $W=(-t,t)$ for some $t>0$. Let $a\in \delta'(A)$. By density of $\Delta'(A)=\Delta_G(G)$, there exists a sequence $0=a_1,a_2,\ldots, a_m=a$ of elements of $\delta'(A)$ such that $|a_{i+1}-a_i|< t$ for each $i$, and hence $a_{i+1}-a_{i}\in W\cap \delta'(A)$ for each $i$, so that $\delta'(A)\cap W$ generates $\delta'(A)$ as claimed. In particular, for all such $V$ we can find a basis $(x_1,\ldots, x_d)$ of $A\cong \Z^d$ whose image under $\Delta'$ lies in $V$.

Let $n\in\N$.
Note that for any $t>0$ close enough to $1$, we have 
$t^{n+1}\leq\frac1n(\sum_{i=-n}^{n}t^i)$ (as for $t=1$ we have strict inequality). 
We may therefore pick $V_n\subseteq(\frac12,2)$ small enough such that
\begin{equation}\label{eq:Vsmall}
v^{n+1}\leq \frac1n\left(\sum_{i=-n}^{n}v^i\right)\qquad\qquad\forall\,v\in V_n,
\end{equation}
and fix some basis $(x_1,\ldots, x_d)$ for $A$ whose image under $\Delta'$ lies in $V_n$. From now on, we identify $A$ with $\Z^d$ via this basis.

Let $j:A\to G$ be a cross section of $\pi$ satisfying $j(0)=1$, chosen to be the natural embedding $A\hookrightarrow\ker\pi\rtimes A$ if $\Delta_G$ is split, and set $T_n=\{j(x_1)^{\pm1}, \ldots, j(x_d)^{\pm1}\}$. 
\cref{lem:split.gen} then implies that there exists a compact subset 
$R_n\subseteq\ker\pi$ such that $S_n=R_n\cup T_n$ forms a symmetric compact generating subset of $G$. Moreover, if $\Delta_G$ is split then by definition of $K$ we may take $S_n=K\sqcup T_n$ for each $n$, as required.

Let $K_n=j([-n,n]^d\cap\Z^d)\subseteq G$. 
Define 
$X_n=\bigcup_{k\in K_n}kR_nk^{-1}$, which is a compact subset of $\ker\pi$. 
Finally, for all $a\in K_n$, we have $\pi(a)=(a_1,\ldots, a_d)$ with $a_i\in [-n,n]$, and for each permutation $\sigma\in \mathfrak{S}(d)$ the elements $a$ and $j(x_{\sigma(1)})^{a_{\sigma(1)}}\ldots j(x_{\sigma(d)})^{a_{\sigma(d)}}$ differ by an element of $\ker\pi$. Let $L_n\subseteq \ker\pi$ be the (finite) subset of all such elements and their inverses.

We now form a compact subset $M_n\subseteq\ker\pi$ by setting $M_n=L'_n\cup L_n\cup X_n$. 
Note that since $\ker\pi$ is open in $G$, the restriction of $\mu$ to $\ker\pi$ is a Haar measure on $\ker\pi$. Moreover, since $\ker\pi$ is the kernel of $\Delta_G$, it is unimodular, and since it is a closed subgroup of $G$, it is also amenable. Hence $\ker\pi$ is geometrically amenable by \cref{lem:amen-unimod}, and so there exists a compact subset $Q_n\subseteq\ker\pi$ such that 
\[
\mu(Q_nM_n\setminus Q_n)\leq\textstyle\frac1n\mu(Q_n).
\]
Let $F_n=Q_nK_n$.

We claim that 
\begin{equation}\label{eq:FFolner}
\mu(F_nS_n\setminus F_n)\le O_d(\textstyle\frac1n)\mu(F_n),
\end{equation}  
which will immediately finish the proof of the proposition.
To see this, first note that $F_n= \bigsqcup_{k\in K_n}Qk$, and hence
\begin{equation}\label{eq:Fn<Qn}
\mu(F_n)=\left(\sum_{k\in K_n}\Delta_G(k)\right)\mu(Q_n).
\end{equation}
Note also that by definition of $X_n$ we have 
\[
F_nR_n=Q_nK_nR_n\subseteq Q_nX_nK_n\subseteq Q_nM_nK_n= \bigsqcup_{k\in K_n}Q_nM_nk,
\]
hence
\[
F_nR_n\setminus F_n\subseteq \bigsqcup_{k\in K_n}(Q_nM_n\setminus Q_n)k,
\]
and hence
\[
\mu(F_nR_n\setminus F_n)\le\left(\sum_{k\in K_n}\Delta_G(k)\right)\mu(Q_nM_n\setminus Q_n).
\]
It then follows from \eqref{eq:Fn<Qn} and the definition of $Q_n$ that 
\begin{equation}\label{eq:FS_HFolner}
\mu(F_nR_n\setminus F_n)\le\textstyle{\frac1n}\mu(F_n).
\end{equation}

We now turn to $T_n$. By symmetry in $x_1,\ldots,x_d$ and in $x_d,x_d^{-1}$, it is enough to show that
\begin{equation}\label{eq:dense.almost.am.final.aim}
\mu(F_nj(x_d)\setminus F_n)\le O(\textstyle{\frac1n})\mu(F_n);
\end{equation}
this will combine with \eqref{eq:FS_HFolner} to prove \eqref{eq:FFolner}, as claimed.
To that end, let
\[
K'_n=\{j(x_1)^{a_1}\ldots j(x_d)^{a_d}: (a_1,\ldots,a_d)\in [-n,n]^d\cap\Z^d\},
\]
and let $F_n'=Q_nK_n'$. By definition of $L'_n$, we have $F_n\subseteq Q_nL_nK_n'$ and $F_n'\subseteq Q_nL_nK_n$. We claim that
\begin{equation}\label{eq:F.to.F'}
\mu(F_n\vartriangle F'_n)\leq O(\textstyle{\frac1n})\mu(F_n).
\end{equation}
To see this, first note that by definition of $L'_n$ we have $F_n'\subseteq Q_nL_nK_n\subseteq Q_nM_nK_n$, and hence
\[
F_n'\setminus F_n\subseteq Q_nM_nK_n\setminus Q_nK_n=\bigsqcup_{k\in K_n}(Q_nM_n\setminus Q_n)k.
\]
In particular, this implies that
\[
\mu(F_n'\setminus F_n)\le\left(\sum_{k\in K_n}\Delta_G(k)\right)\mu(Q_nM_n\setminus Q_n),
\]
so that by \eqref{eq:Fn<Qn} and definition of $Q_n$ we have $\mu(F_n'\setminus F_n)\le\frac1n\mu(F_n)$, as required. On the other hand, we similarly have $F_n\subseteq Q_nL_nK_n'$, and a similar argument then shows that $\mu(F_n\setminus F_n')\le\frac1n\mu(F_n')\le\frac2n\mu(F_n)$, giving \eqref{eq:F.to.F'} as claimed. Since $\Delta'(x_d)<2$, \eqref{eq:F.to.F'} in turn implies that
\[
\mu(F_nj(x_d)\setminus F_n'j(x_d))=\mu((F_n\setminus F_n')j(x_d))=\Delta'(x_d)\mu((F_n\setminus F_n')<\textstyle{\frac2n}\mu(F_n).
\]
which then combines with \eqref{eq:F.to.F'} to show that in order to prove \eqref{eq:dense.almost.am.final.aim}---and hence the proposition---it is enough to prove that
\begin{equation}\label{eq:Fn'j(x_d)}
\mu(F_n'j(x_d)\setminus F_n')\le\textstyle{\frac1n}\mu(F_n').
\end{equation}

Let $K_n''=\{j(x_1)^{a_1}\ldots j(x_{d-1})^{a_{d-1}}: (a_1,\ldots,a_{d-1})\in [-n,n]^{d-1}\cap\Z^{d-1}\}.$
We have 
\[
F_n'=\bigsqcup_{i=-n}^nQ_nK_n''j(x_d)^i,
\]
so that
\[
\mu(F_n')=\left(\sum_{i=-n}^n \Delta_G(j(x_d))^i\right)\mu(Q_nK''_n)=\left(\sum_{i=-n}^n \Delta'(x_d)^i\right)\mu(Q_nK_n'')
\]
and 
\[
F_n'j(x_d)\setminus F_n'\subseteq Q_nK_n''j(x_d)^{n+1},
\]
and hence
\[
\mu(F'j(x_d)\setminus F_n')\leq \Delta_G(j(x_d))^{n+1}\mu(Q_nK_n'')= \Delta'(x_d)^{n+1}\mu(Q_nK_n'').
\]
Applying \eqref{eq:Vsmall} with $v=\Delta'(x_d)$, we deduce that \eqref{eq:Fn'j(x_d)} holds as required.
\end{proof}

Finally, we prove the following slight refinement of \cref{thm:geo.amen<->action.simple}.
\begin{theorem}\label{thm:geo.amen<->action}
Suppose $G$ is a compactly generated totally disconnected locally compact group. Then
\begin{itemize}
\item $G$ is geometrically amenable if and only if it admits a $1$-Lipschitz proper action on a locally finite amenable graph; and
\item $\Fol^\ast G=0$ if and only if $G$ admits a contingently $1$-Lipschitz proper action on a locally finite amenable graph, if and only if there exists $r\in\N$ such that $G$ admits a contingently $r$-Lipschitz proper action on a locally finite $r$-amenable graph for some $r\geq 1$. 
\end{itemize}
\end{theorem}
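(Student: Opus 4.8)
The plan is to prove each of the two bullet points as a two-directional equivalence, reusing the machinery already assembled in the paper. The forward direction of the first bullet (geometric amenability $\implies$ existence of a $1$-Lipschitz proper action on an amenable graph) is the one that needs genuine construction; the reverse direction is essentially \cref{prop:LipActionOnAmenable} with $r=1$, and the second bullet will then be obtained by a parallel but slightly looser argument together with the ``only infimum over nice generating sets'' claim extracted in the proof of \cref{prop:Folstar.versus.h}.

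For the reverse implications I would argue as follows. If $G$ admits a $1$-Lipschitz proper action on a locally finite amenable graph, then \cref{prop:LipActionOnAmenable} (with $r=1$) immediately gives that $G$ is geometrically amenable, settling one half of the first bullet. For the second bullet, a contingently $1$-Lipschitz proper action on a locally finite amenable graph $\Gamma$ gives, by \cref{lem:almostLipImpliesCoarseConnected}, that each orbit induces a connected vertex-transitive subgraph $Y_z$ on which the orbit map is $1$-Lipschitz with respect to the generating set $S_z$ furnished by \cref{lem:S.gen}. Running the pigeonhole step from the proof of \cref{prop:LipActionOnAmenable}, I extract for each $\eps>0$ a transitive orbit $Y_z$ and a finite set with boundary ratio at most $\eps$, then apply \cref{lem:quantitativeVT} to produce a compact open $A$ with $\mu(AS_z\setminus A)/\mu(A)\le\eps$; since $S_z$ is a compact symmetric generating set, this exhibits $\Fol^\ast G\le\eps$, whence $\Fol^\ast G=0$. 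The same argument applied to an $r$-amenable graph under a contingently $r$-Lipschitz action, after the ``add all edges of length $\le r$'' reduction used at the start of the proof of \cref{prop:LipActionOnAmenable}, handles the third equivalent condition in the second bullet.

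The forward directions are where the construction lives. For the first bullet, suppose $G$ is geometrically amenable, hence (by \cref{lem:amen-unimod}) amenable and unimodular. By Abels's theorem $G$ acts properly transitively on some connected locally finite graph $\Gamma$; choosing $o\in\Gamma$ and $S=\{g:d(g\cdot o,o)\le1\}$ as in \cref{lem:S.gen}, the orbit map is tautologically $1$-Lipschitz, and \cref{prop:trans} tells us that geometric amenability of $G$ is equivalent to amenability of $\Gamma$. Thus the Abels graph is itself the required $1$-Lipschitz proper action on a locally finite amenable graph, and no further construction is needed. For the forward direction of the second bullet, the hypothesis $\Fol^\ast G=0$ does not force unimodularity, so I cannot simply invoke one fixed Abels graph; instead I would combine \cref{prop:Folstar.versus.h} (which gives $\h_G=\Fol^\ast G=0$, i.e.\ a sequence of proper $G$-transitive graphs $\Gamma_n$ with $\h(\Gamma_n)\to0$) with a disjoint-union construction, taking $\Gamma=\bigsqcup_n\Gamma_n$. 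This $\Gamma$ is locally finite, amenable (since its Cheeger constant is $\inf_n\h(\Gamma_n)=0$), and each component is a transitive orbit of $G$ on which, by \cref{lem:S.gen}, the orbit map with respect to the component-specific generating set $S_n$ is $1$-Lipschitz; by \cref{lem:almostLipImpliesCoarseConnected} this is exactly a contingently $1$-Lipschitz proper action, and the edges-of-length-$\le r$ refinement yields the third condition.

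The main obstacle I anticipate is bookkeeping in the forward direction of the second bullet: ensuring that the disjoint union $\bigsqcup_n\Gamma_n$ genuinely carries a single proper $G$-action realizing every $\Gamma_n$ as an orbit simultaneously (the properness of the combined action, i.e.\ compactness of vertex stabilizers in $G$, must be checked componentwise and is inherited from the properness of each $\Gamma_n$), and that ``contingently'' $1$-Lipschitz is the right notion here precisely because the generating sets $S_n$ vary with the component and cannot in general be taken uniform — this is exactly the phenomenon that separates $\Fol^\ast G=0$ from genuine geometric amenability, so the construction must be careful not to accidentally produce a single $1$-Lipschitz action, which would falsely upgrade the conclusion.
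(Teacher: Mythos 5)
Your proposal is correct and follows essentially the same route as the paper's own proof: for the first bullet, your use of Abels's theorem plus \cref{prop:trans} is exactly what the paper's appeal to \cref{thm:amen+unimod} unwinds to (your observation that the canonical generating set $S=\{g:d(g\cdot o,o)\le1\}$ makes the transitive action tautologically $1$-Lipschitz even lets you skip the paper's edge-adding step there), and the reverse direction is \cref{prop:LipActionOnAmenable} in both cases. For the second bullet you likewise match the paper---disjoint union of the transitive graphs from \cref{prop:Folstar.versus.h} together with \cref{lem:almostLipImpliesCoarseConnected} in one direction, and in the other your pigeonhole-plus-\cref{lem:quantitativeVT} argument is the same mechanism the paper routes through \cref{prop:almost.amen.trans.seq}---so there are no gaps.
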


\begin{proof}
If $G$ is geometrically amenable then by \cref{thm:amen+unimod} there exists $r\in\N$ such that $G$ admits an $r$-Lipschitz proper action on a bounded-degree amenable graph $\Gamma_0$. Then $G$ acts $1$-Lipschitz properly on the bounded-degree amenable graph obtained from $\Gamma_0$ by adding edges between all pairs of vertices at distance at most $r$. The converse is given by \cref{prop:LipActionOnAmenable}.

Now suppose that $\Fol^\ast G=0$. By \cref{prop:almost.amen.trans.seq} (which recall followed from \cref{prop:Folstar.versus.h}), there exists a sequence $\Gamma_n$ of $G$-vertex transitive proper locally finite graphs and a sequence of finite subsets $A_n\subset \Gamma_n$ such that $|\partial A_n|/|A_n|\to 0$. Consider the disjoint union $\Gamma=\bigsqcup \Gamma_n$: since the graphs induced by the orbits are connected, we deduce from \cref{lem:almostLipImpliesCoarseConnected} that the action is contingently $1$-Lipschitz. Besides, $\Gamma$ is obviously amenable, and the $G$-action on it is proper. 

Conversely, assume that $G$ admits a contingently $r$-Lipschitz proper action on a locally finite $r$-amenable graph for some $r\geq 1$. On adding edges between all pairs of points at distance at most $r$, we reduce to the case $r=1$.
Hence we can assume that $G$ admits a $1$-Lipschitz proper action on a locally finite amenable graph $\Gamma$. Removing edges from $\Gamma$ does not change the fact that it is amenable, so  we may assume that $\Gamma$ is a disjoint union of vertex-transitive proper locally finite graphs $\Gamma_n$. 
Now if $F\subseteq \Gamma$, we have $F=\bigsqcup_n F_n$, where $F_n\subseteq \Gamma_n$, and $\partial F=\bigsqcup_n \partial F_n$. By the pigeonhole principle, there exists $n$ such that $|\partial F_n|/|F_n|\leq |\partial F|/|F|$.
We deduce that if $(A_k)$ is a sequence of subsets of $\Gamma$ such that $|\partial A_k|/|A_k|\to 0$, there exists $A'_k\subseteq A_k$ such that $|\partial A'_k|/|A'_k|\to 0$ and such that $A'_k$ is contained in some $\Gamma_{n_k}$ for each $k$. Hence the sequence $\Gamma_{n_k}$ is asymptotically amenable, which implies that $\Fol^\ast G=0$ by \cref{prop:almost.amen.trans.seq}. 
\end{proof}

\section{Sol}
Recall that the group Sol is defined as the semi-direct product $\Z^2\rtimes \Z$, where the generator of $\Z$ acts on $\Z^2$ via the matrix $
 \left(\begin{array}{cc} 2 & 1  \\ 1 & 1 \end{array}\right)$.
\begin{prop}\label{prop:Sol}
Sol does not admit a continuous quasitransitive action on a locally finite non-amenable graph.
\end{prop}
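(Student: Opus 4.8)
The plan is to pass to the closure of the image of Sol inside $\Aut(\Gamma)$ and reduce the whole statement to \cref{thm:sw}. Suppose Sol acts continuously (automatic, as Sol is discrete) and quasitransitively on a locally finite graph $\Gamma$, write $\phi\colon\mathrm{Sol}\to\Aut(\Gamma)$ for the induced homomorphism, and let $H=\overline{\phi(\mathrm{Sol})}$ be the closure of its image. Since vertex stabilisers in $\Aut(\Gamma)$ are compact and open and $H$ is closed, $H$ is a locally compact group acting properly and quasitransitively on $\Gamma$; so by \cref{thm:sw} it suffices to prove that $H$ is amenable and unimodular, which forces $\Gamma$ to be amenable. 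A routine reduction lets me assume $\Gamma$ is connected: a non-amenable $\Gamma$ has a non-amenable connected component $C$, and since the action is quasitransitive the setwise stabiliser of $C$ acts quasitransitively on $C$. Analysing subgroups of Sol, this stabiliser is either abelian (a subgroup of $\Z^2$, or cyclic) or again of the form $\Z^2\rtimes_B\Z$ where $B$ is a power of the defining matrix $A$ restricted to a finite-index $B$-invariant sublattice. The structural feature I exploit throughout is that $A$, and all its powers, have irreducible characteristic polynomial over $\Q$ (equivalently, irrational eigenvalues, none equal to $\pm1$), since $A$ generates a rank-one group of units in the real quadratic field $\Q(\sqrt5)$.

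Amenability of $H$ is the easy half. The image $\phi(\mathrm{Sol})$ is a quotient of the amenable group Sol, hence amenable as a discrete group, and it is dense in $H$. Given any continuous affine action of $H$ on a compact convex set, Day's fixed-point theorem provides a point fixed by $\phi(\mathrm{Sol})$; its $H$-stabiliser is closed and contains the dense subgroup $\phi(\mathrm{Sol})$, hence equals $H$. Thus $H$ fixes the point, and $H$ is amenable.

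The substance of the argument is the unimodularity of $H$. Write $t$ for the generator of the $\Z$-factor and $\Lambda=\Z^2$ for the normal subgroup. The commutator subgroup of $H$ contains $[\phi(t),\phi(\Lambda)]=\phi((A-I)\Lambda)$, a finite-index subgroup of $\phi(\Lambda)$, so the modular homomorphism $\Delta_H$, taking values in the torsion-free group $\R^*_+$, is trivial on $\phi(\Lambda)$ and hence, by continuity, on its closure $M=\overline{\phi(\Lambda)}$. As $\Lambda$ and $t$ generate Sol, whose image is dense, it remains only to show $\Delta_H(\phi(t))=1$. Now $M$ is a closed normal abelian subgroup of $H$ with $H/M$ abelian, so by the standard relation between modular functions in an extension $\Delta_H(\phi(t))$ equals, up to inversion, the modulus of the automorphism of $M$ given by conjugation by $\phi(t)$, which acts on the dense subgroup $\phi(\Lambda)$ as the integer matrix $A$. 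I compute this modulus to be $1$. Either $\phi(\Lambda)$ is finite, making $M$ compact; or $\phi(\Lambda)\cong\Z^2$, and since $M$ is a compactly generated abelian totally disconnected locally compact group the structure theorem gives $M\cong\Z^b\times K$ with $K$ profinite. Density makes the projection $\Z^2\cong\phi(\Lambda)\to M/K\cong\Z^b$ surjective and $A$-equivariant, so its kernel is an $A$-invariant subgroup of $\Z^2$; irreducibility of $A$ forces this kernel to have rank $0$ or $2$, i.e.\ $b\in\{0,2\}$, and when $b=2$ a dense copy of $\Z^2$ projecting isomorphically to $\Z^2$ cannot meet a nontrivial compact factor, so $K$ is trivial. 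Thus $M$ is compact or discrete, on which every topological automorphism preserves Haar measure; hence the modulus is $1$, giving $\Delta_H(\phi(t))=1$ and so $\Delta_H\equiv1$.

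I expect the modulus computation of the previous paragraph to be the main obstacle: amenability and the reduction to \cref{thm:sw} are formal, but ruling out a nontrivial modular homomorphism on $H$ is exactly where improper actions of amenable groups—such as the lamplighter and Baumslag--Solitar actions on trees recalled in \cref{rem:properness}—would otherwise intrude, and it is precisely the irreducibility of the characteristic polynomial of $A$, the feature distinguishing Sol from those examples, that prevents $M$ from acquiring a non-unimodular $\Z$-factor. Should the structure-theorem dichotomy be deemed too heavy, an alternative is to compute the modulus directly as the ratio of covolumes $\mathrm{covol}(\Lambda_0)/\mathrm{covol}(A\Lambda_0)=1/|\det A|=1$, where $\Lambda_0=\Lambda\cap U$ is a lattice inside a compact open subgroup $U\le M$ (necessarily of full rank, again by irreducibility); here one additionally uses $\det A=1$.
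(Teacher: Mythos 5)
Your proof is correct, and its skeleton matches the paper's: pass to the closure $H$ of the image of Sol in the automorphism group, observe that amenability of $H$ is inherited from the dense amenable image, and reduce via \cref{thm:sw} to the real issue, unimodularity of $H$; both arguments also hinge on the same structural feature of Sol, namely that the defining matrix $A$ has irrational eigenvalues, so that nontrivial $A$-invariant subgroups of $\Z^2$ are of finite index. Where you genuinely diverge is in how unimodularity is established. The paper argues by contradiction: assuming the closure $G$ is non-unimodular, it shows $\overline{[G,G]}$ is abelian with $\Z^2$ dense in it, deduces $G=\overline{[G,G]}\rtimes\Z$ (compactness of the abelianisation being excluded by non-unimodularity), and then uses van Dantzig's theorem plus the conjugation action of $t$ to force $\overline{[G,G]}$ to be compact, a contradiction; everything is elementary and self-contained. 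You instead compute $\Delta_H$ directly: the commutator identity $[\phi(t),\phi(v)]=\phi((A-I)v)$, with $\det(A-I)\neq 0$ and $\R^*_+$ torsion-free, kills $\Delta_H$ on $M=\overline{\phi(\Z^2)}$, and then Weil's formula $\Delta_H(h)=\Delta_{H/M}(\bar h)\cdot\mathrm{mod}(\alpha_h|_M)^{-1}$ together with the structure theorem for compactly generated LCA groups (writing $M\cong\Z^b\times K$ with $K$ the maximal compact, hence characteristic, subgroup) shows that conjugation by $\phi(t)$ has modulus $1$, since $M$ is forced to be either compact or discrete. Your route is more direct (no contradiction) and isolates the conceptual point---conjugation cannot scale Haar measure on $M$---but it leans on heavier standard machinery (Weil's formula for extensions, the structure theorem), where the paper needs only van Dantzig. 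A genuine bonus of your version is the explicit treatment of disconnected graphs, which the paper's quasi-isometry step quietly assumes away.

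Two small repairs are needed. First, an ordering issue: you invoke compactness of vertex stabilisers in $\Aut(\Gamma)$ \emph{before} reducing to a connected graph, but for disconnected locally finite graphs stabilisers need not be compact (consider infinitely many isomorphic components); perform the reduction to a non-amenable component $C$ and its setwise stabiliser in Sol first, and only then take closures inside $\Aut(C)$. Second, compact generation of $M$ (needed for the structure theorem) deserves its one-line justification: if $U\le M$ is a compact open subgroup given by van Dantzig, then $\phi(\Z^2)U$ is an open, hence closed, subgroup containing a dense subgroup, so $M=\phi(\Z^2)U$ is generated by the compact set $U$ together with the images of the two generators. Finally, your fallback covolume argument is weaker than your main one: $\phi(\Z^2)\cap U$ need not be $A$-invariant, so irreducibility does not directly force it to have full rank; the main argument, however, does not rely on it.
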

\begin{proof}
Assume by contradiction that such action exists on a graph $X$. Then since the image of Sol is cocompact in the automorphism group of $X$, its closure $G$ is quasi-isometric to $X$. But because Sol is amenable, then so is $G$. Since $X$ is non amenable though, $G$ is not geometrically amenable, and so the only way this can happen is if $G$ is non-unimodular. Hence we are reduced to proving that Sol does not admit any continuous morphism with dense image $\pi:Sol\to G$, where $G$ is totally disconnected and non-unimodular. 

We start observing that since Sol is metabelian, then so is $G$, we deduce that $A:=\overline{[G,G]}$ is abelian. Recall that Sol is isomorphic to $\Z^2\rtimes \Z$, where $\Z^2$ coincides with the derived subgroup. We therefore have that $\Z^2\subset [G,G]$. On the other hand, we have $\Z\cap A=\{1\}$. Indeed, since $\overline{[G,G]}$ is abelian, this would imply that the image of Sol is virtually abelian, which would be incompatible with the fact that $G$ is non-unimodular. This implies that $Sol\cap A=\Z^2$, and therefore that $\Z^2$ is dense in $A$.

Moreover, the image of $G$  in $G^{ab}:=G/A$ coincides with the image of $\Z$, which is dense. We have two possibilities: either $G^{ab}$ is compact, which again would be at odd with the fact that $G$ is non-unimodular, or it is discrete and therefore isomorphic to $\Z$.
Hence $G=A\rtimes \Z$. Now let $K$ be an open compact subgroup of $A$. Since it is open and $\Z^2$ is dense in $A$, $H_K=\Z^2\cap K$ is dense in $K$. Since $G$ is not discrete, $H_K$ must be non-trival. Let $u\in H_K\setminus \{1_G\}$. Since the conjugation by the generator $t$ in $\Z$ is continuous, we have that $t^{-1}ut$ must also be contained in a compact subgroup of $A$. But since $A$ is abelian, this implies that $t^{-1}ut$ and $u$ are contained in a compact subgroup of $A$. But since these are linearly independent  vectors in $\Q^2$, they generate a finite index subgroup of $\Z^2$. Hence since $\Z^2$ is dense in $A$, this would imply that $A$ is compact, again not compatible with $G$ being non-unimodular. So we are done.
\end{proof}

\section{Edge boundaries}\label{sec:edge}
In this paper, we chose to focus on the exterior boundary. The important aspect of this choice is that this boundary is a set of vertices, not of edges. The main reason for this choice comes from the fact that it is well suited for Cayley graphs of locally compact compactly generated groups: indeed the measure of the exterior boundary is well-behaved despite the fact that the graph may have infinite degree. Moreover, as seen in \cref{lem:quantitativeVT}, there is a nice connection between F\o lner sets in the group, and F\o lner sets in the graph on which $G$ acts transtively. 

Say that a graph $\Gamma$ is {\it edge-amenable} if there exists a sequence of finite subset $F_n$ such that $|\partial_e F_n|/|F_n|$ tends to zero, where the edge-boundary $\partial_e F_n$ is the set of edges joining a vertex of $F_n$ to a vertex of its complement. 
Since amenability and edge-amenability are obviously equivalent for bounded degree graphs,  this discussion is only relevant for the statements involving locally finite graphs with unbounded degree.  Let us focus our discussion here on \cref{thm:geo.amen<->action.simple}. The first statement remains true for the edge boundary: one implication is immediate by the previous discussion, and the other one follows from the simple observation that the size of the edge boundary is always larger than than that of the exterior boundary. 

However, the second statement of  \cref{thm:geo.amen<->action.simple} does not have an obvious analogue, motivating the following question. 
\begin{que}
Characterise those locally compact groups $G$ admitting a contingently $1$-Lipschitz proper action on a locally finite edge-amenable graph. 
\end{que}

\section*{Ackowledgements}
We thank Itai Benjamini for an inspiring discussion, Russell Lyons for help with the references and corrections to an earlier draft, and an anonymous referee for a number of helpful suggestions, including to consider edge boundaries as we did in \cref{sec:edge}.

\end{document}